\documentclass[11pt]{article}
\usepackage{geometry}
\usepackage[utf8]{inputenc}
\usepackage[english]{babel}

\usepackage{authblk}

\usepackage{amsmath,amssymb,amsthm}
\usepackage{physics} 
\usepackage[justification=centering]{caption}
\usepackage[colorlinks=true, citecolor=red, linkcolor=blue]{hyperref}
\usepackage[nameinlink, capitalize]{cleveref}

\usepackage[colorinlistoftodos]{todonotes}
\usepackage{varwidth}

\usepackage{color}
\usepackage{float}
\usepackage[super]{nth}
\usepackage{booktabs}
\usepackage{rotating}
\usepackage{verbatim}
\usepackage{multirow}
\usepackage{physics}
\usepackage{mathtools}
\usepackage{subcaption}
\usepackage{bm}
\usepackage[toc,page]{appendix}
\usepackage[shortlabels]{enumitem}
\usepackage{nomencl}

\newcounter{mnotecount}[section]

\renewcommand{\themnotecount}{\thesection.\arabic{mnotecount}}

\newcommand{\mnote}[1]
{\protect{\stepcounter{mnotecount}}$^{\mbox{\footnotesize
$
\bullet$\themnotecount}}$ \marginpar{
\raggedright\em
$\!\!\!\!\!\!\,\bullet$\themnotecount: #1} }

\newcommand{\jm}[1]{{\color{purple}\mnote{{\color{purple}
#1} }}}

\theoremstyle{plain}
\newtheorem{theorem}{Theorem}
\newtheorem{corollary}{Corollary}
\newtheorem{lemma}{Lemma}
\newtheorem{proposition}{Proposition}

\theoremstyle{definition}

\newtheorem{definition}{Definition}
\newtheorem{remark}{Remark}

\numberwithin{equation}{section}

\title{Velocity Averaging for the Wigner Equation with Electromagnetic Fields}
\author[a]{François Golse}
\author[a,b,c]{Jakob Möller}
\affil[a]{CMLS, École polytechnique, F-91128 Palaiseau}
\affil[b]{Research Platform MMM "Mathematics-Magnetism-Materials" c/o Fak. Math., Univ. Wien, A-1090 Vienna}
\affil[c]{Wolfgang Pauli Institute, A-1090 Vienna}

\begin{document}
\maketitle

\begin{abstract}
    We discuss the application of $L^2$-based velocity averaging to the Wigner kinetic equation governing the quantum evolution of the Wigner transform of a density operator of a particle subjected to an external electromagnetic field. This extends the $L^2$-based averaging lemma from [F.Golse, J.Möller: Commun. Math. Sci. (2026)] to electromagnetic fields. We make use of the magnetic or gauge-invariant Wigner transform due to Stratonovich [Sov. Phys.
D., (1)414–418, 1956], which differs from the usual definition by a phase factor involving the circulation of the magnetic vector potential along a line segment. The magnetic Wigner transform obeys the Wigner equation in the generalized phase space variables $(x,v=\xi-A(t,x))$, which is a more natural setting for the application of velocity averaging. Since the magnetic Wigner equation contains a first order derivative in the spatial variable, we obtain compactness in $L^2$ but no gain in regularity.
\end{abstract}

\setcounter{tocdepth}{1}

\section{Introduction}

The classical kinetic evolution of a particle with phase space density $f\colon \mathbb{R}_t\times\mathbb{R}^d_x\times \mathbb{R}^d_v \rightarrow \mathbb{R}_+$,  propelled by an electromagnetic field $E=-\nabla \phi -\partial_t A$, $B=\nabla \times A \in \mathbb{R}^d$ where $A\colon \mathbb{R}_t\times \mathbb{R}^d_x \rightarrow \mathbb{R}^d$ is the magnetic vector potential, while $\phi\colon \mathbb{R}_t\times \mathbb{R}^d_x \rightarrow \mathbb{R}$ is the electric scalar potential, is given by the electromagnetic transport equation
\begin{equation}
    \partial_t f + v\cdot \nabla_x f  + \nabla_v \cdot((E + v \times B )f) = 0.
    \label{eq: magnetic Vlasov}
\end{equation}
Note that vector differential operators such as $\nabla, \Delta$ are taken with respect to $x$, sometimes with respect to $x$ and $v$ and never with respect to $t$. When not mentioned, $\nabla$ means $\nabla_x$ and $\Delta$ means $\Delta_x$.

The quantum version of the electromagnetic transport equation is given by the \emph{magnetic Wigner equation}
\begin{equation}
    i\hbar \partial_t W_A^{\hbar} = \{\{H,W_A^{\hbar}\}\}_B, \quad H(t,x,v) = \frac{1}{2}|v|^2 + \phi(t,x).
    \label{eq: magnetic wigner moyal equation}
\end{equation}
where $\{\{\cdot,\cdot\}\}_B$ denotes the \emph{magnetic Moyal bracket}, which is the quantum deformation of the \emph{magnetic Poisson bracket} $$\{f,g\}_B := (\sum_{k=1}^d \partial_{x_k}f\partial_{v_k}g-\partial_{v_k}f\partial_{x_k}g) + \sum_{i,j,m}^d \epsilon_{ijm}B_m \partial_{v_i}f\partial_{v_j}g.$$
The explicit form of the magnetic Wigner equation \eqref{eq: magnetic wigner moyal equation} is given in Proposition \ref{thm:magnetic equations}. The \emph{magnetic Wigner transform} $W_A^{\hbar}$ \cite{mantoiu_magnetic_2004, muller_product_1999, serimaa_gauge-independent_1986, stratonovich_gauge_1956} is defined as 
\begin{equation}
   W^{\hbar}_A(t,x,v):=\frac{1}{(2\pi)^d}\int_{\mathbb R^d}R(t,x+\tfrac\hbar{2}y,x-\tfrac\hbar{2}y)e^{\frac{i}\hbar\Gamma_{A(t)}[x+\frac\hbar{2}y,x-\frac\hbar{2}y]}e^{-iv\cdot y}d y,
    \label{eq: Wigner magnetic}
\end{equation}
where $R$ is the density matrix, cf. Section \ref{sec:preliminaries}, and where $\Gamma_A[X,Y]$ denotes the circulation of $A$ along the line segment joining $X$ and $Y$,
\begin{equation*}
        \Gamma_A[X,Y]:=\int_0^1A((1-s)X+sY)\cdot (Y-X) d s.
    \end{equation*}
    The magnetic Wigner equation can be written in the following form
\begin{equation}
\label{eq:wigner caricature}
    \partial_t W_A^{\hbar} + v\cdot \nabla_x W_A^{\hbar} 
    = \Phi^{\hbar}[\phi,B,\partial_t A]W_A^{\hbar},
\end{equation}
where $\Phi^{\hbar}[\phi,B,\partial_tA]$, defined in \eqref{eq:Phi}, is an integro-differential operator acting on $W^{\hbar}_A$, that involves the electromagnetic field $(E,B) = (-\nabla \phi-\partial_t A,\nabla \times A)$, cf. Proposition \ref{thm:magnetic equations}. Note that the explicit form of the magnetic Wigner equation was recently derived in \cite{nedjalkov_wigner_2019}. 

Compare this to the ``minimally coupled" Wigner equation
\begin{equation}
    i\hbar \partial_t W^{\hbar} = \{\{H,W^{\hbar}\}\}, \quad H(t,x,\xi) =\frac{1}{2}|\xi-A(t,x)|^2 + \phi(t,x),
    \label{eq: wigner moyal equation}
\end{equation}
where $W^{\hbar}$ is the \emph{non-magnetic} Wigner transform of the density matrix $R$:
\begin{equation}
    W^{\hbar}(t,x,\xi) = \frac{1}{(2\pi)^d} \int e^{-i \xi \cdot y} R(t,x+\tfrac{\hbar y}{2},x-\tfrac{\hbar y}{2}) d y.
    \label{eq: Wigner non magnetic}
\end{equation}
Here, $\{\{\cdot,\cdot\}\}$ denotes the Moyal bracket, i.e. the quantum deformation of the canonical Poisson bracket
\begin{equation*}
    \{f,g\} := \sum_{k=1}^d \partial_{x_k}f\partial_{\xi_k}g-\partial_{\xi_k}f\partial_{x_k} g.
\end{equation*}
The term \emph{minimal coupling} refers to the replacement of the kinetic momentum $\xi$ by the generalized momentum 
\begin{equation}
    v \equiv v(t,x,\xi) := \xi-A(t,x).
\end{equation}
For a more detailed discussion of canonical and magnetic Poisson brackets and their quantization we refer to Sections \ref{sec:PoissonBrackets} and  \ref{sec:MagneticQuantization}.

If $W$ is the Wigner measure of $W^{\hbar}_A$ (which coincides with the Wigner measure $W$ of $W^{\hbar}$, cf. Proposition \ref{thm:condition mixed states}), then – for $E$ and $B$ sufficiently regular – the right hand side $\Phi^{\hbar}[\phi,B,\partial_t A]W^{\hbar}_A$ of \eqref{eq:wigner caricature} should converge to the forcing term $-\nabla_v \cdot ((E+ v \times B) W)$ of the electromagnetic transport equation \eqref{eq: magnetic Vlasov} in the classical limit $\hbar \rightarrow 0$. Therefore, if $W^{\hbar}(t,x,\xi)$ converges to a Wigner measure $W(t,x,\xi)$ solving the minimally coupled Liouville equation \eqref{eq: Liouville}, then the magnetic Wigner transform $W^{\hbar}_A(t,x,v)$ converges to $W(t,x,\xi-A(t,x))$, solving the electromagnetic transport equation \eqref{eq: magnetic Vlasov}. \\

The main goal of this paper is to establish a semiclassical averaging lemma (i.e. an averaging lemma that holds uniformly in the semiclassical regime $\hbar \rightarrow 0$, cf. \cite{golse_velocity_2025, golsemöllermauser}) for the magnetic Wigner equation \eqref{eq:wigner caricature}. More precisely, we show in Theorem \ref{thm:main} below that the family of averages in velocity, indexed by $\hbar$,
\begin{equation*}
    \rho^{\hbar}_{\varphi}(W^{\hbar}_A)(t,x) := \int_{\mathbb{R}^d} W^{\hbar}_A(t,x,v) \varphi(v) d v,
\end{equation*}
where $\varphi$ is some compactly supported, sufficiently regular cutoff function, are relatively compact as a subset of $L^2_{\mathrm{loc}}(\mathbb{R}_t,L^2(\mathbb{R}^d_x))$.

\begin{theorem}
\label{thm:main}
    Let $\phi^{\hbar} \in W^{1,\infty}(\mathbb{R}^d)$, $A^{\hbar} \in L^{\infty}(\mathbb{R}_t,\dot{H}^1(\mathbb{R}^d_x))$\footnote{Here, $\dot{H}^s$ denotes the homogeneous Sobolev space of order $s$, defined by the norm $$\|f\|^2_{\dot{H}^s} := \int |\xi|^{2s}|\widehat{f}(\xi)|^2 d\xi.$$} be a family of electromagnetic potentials, indexed by $\hbar$ , such that  
    \begin{align*}
        \begin{cases}
            \partial_t A^{\hbar}\in L^{\infty}(\mathbb{R}^d) \\
            B^{\hbar}=\nabla \times A^{\hbar} \in L^{\infty}(\mathbb{R}^d) \\
            \hbar (\nabla \times B^{\hbar}) \rightarrow 0 \quad \text{in} \quad L^{\infty}(\mathbb{R}^d) \quad \text{as} \quad \hbar \rightarrow 0.
        \end{cases}
    \end{align*}
    Let $R^{\hbar} = \sum_{j\geq1} \lambda_j^{\hbar}\ket{\psi_j}\bra{\psi_j}$ be a family of density operators satisfying \begin{equation}
        \label{eq:condition mixed states}\tr_{L^2(\mathbb{R}^d)}((R^{\hbar})^2) = \|R^{\hbar}\|_{L^2(\mathbb{R}^d\times\mathbb{R}^d)}^2 = \sum_{j=1}^{\infty} (\lambda_j^{\hbar})^2 \leq C(2\pi\hbar)^d,
    \end{equation} and let $W_A^{\hbar} = W^{\hbar}_A[R^{\hbar}]$ be their magnetic Wigner transform \eqref{eq: Wigner magnetic}, obeying the magnetic Wigner equation \eqref{eq: magnetic wigner moyal equation}. Let $\varphi \in C^2_c(\mathbb{R}^d)$. Then the family of averages
    \begin{equation*}
        \rho^{\hbar}_{\varphi}(W^{\hbar}_A)(t,x) := \int_{\mathbb{R}^d} W^{\hbar}_A(t,x,v) \varphi(v) d v,
    \end{equation*}
    satisfies the following bound for the finite differences $\Delta_z\rho^{\hbar}_{\varphi}(x) := \rho^{\hbar}_{\varphi}(x+z)-\rho^{\hbar}_{\varphi}(x)$,
    \begin{equation}
    \label{eq:modulus of continuity Wigner}
            \|\Delta_z \rho^{\hbar}_{\varphi}(W^{\hbar}_A)\|_{L^2} \leq C \left(\hbar^{\frac{1}{6}}+|z|^{\frac{1}{6}}\right)\|W^{\hbar}_A\|_{L^2}.
        \end{equation}
    In particular, $\rho^{\hbar}_{\varphi}(W^{\hbar}_A)$ is relatively compact in $L^2([-T,T]\times \mathbb{R}^d_x)$ for all $T>0$.
\end{theorem}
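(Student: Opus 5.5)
We follow the $L^2$ velocity-averaging strategy of \cite{golse_velocity_2025}. We work on the magnetic Wigner equation \eqref{eq:wigner caricature}, Fourier transformed in $(t,x)\mapsto(\tau,k)$. We split the right-hand side $\Phi^{\hbar}[\phi,B,\partial_tA]W^{\hbar}_A$ into three parts, each written in divergence form in $v$ up to remainders:
\begin{itemize}
\item the electric part, $-\nabla_v\cdot(E^{\hbar}\,\cdot)$ in its quantum (finite-difference in $v$) form, controlled by $\|\nabla\phi\|_{L^\infty}+\|\partial_tA\|_{L^\infty}$;
\item the magnetic Lorentz part $-\nabla_v\cdot((v\times B^{\hbar})\,\cdot)$, controlled by $\|B\|_{L^\infty}$;
\item a higher-order magnetic remainder whose kernel involves $\hbar\nabla\times B^{\hbar}$.
\end{itemize}
The remainder is small in operator norm on $L^2$ by hypothesis. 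The first-order derivative in $x$ hidden in $\Phi^\hbar$ (coming from the circulation phase) is precisely what forbids any gain of regularity. We therefore aim only for the modulus-of-continuity estimate \eqref{eq:modulus of continuity Wigner}, and not for an $H^s$ bound.

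\textbf{Step 1 (operator bounds).} Using the explicit form of $\Phi^\hbar$ from Proposition \ref{thm:magnetic equations}, we write the electric and Lorentz parts as $\nabla_v\cdot F^{\hbar}$. Here $F^\hbar$ is obtained by applying pseudo-differential operators in $v$ (symbols given by averages $\int_{-1/2}^{1/2}E(x+s\hbar y)\,ds$ and the analogous $B$-averages) to $W^\hbar_A$. These are bounded on $L^2_{x,v}$ (with a weight $\langle v\rangle$ for the Lorentz term, which is harmless since $\varphi$ has compact support). For any smooth compactly supported test function $\chi$ in $v$ we then get
\[
\Big|\int \Phi^\hbar W^\hbar_A\,\chi\,dv\Big|\lesssim \|\nabla_v\chi\|\,\|W^\hbar_A\|_{L^2}+o(1)\cdot\|W^\hbar_A\|_{L^2},
\]
and $\|W^\hbar_A\|_{L^2}\le C$ uniformly by \eqref{eq:condition mixed states} and the unitarity of the (magnetic) Wigner transform. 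Derivatives landing on $\chi$ can be handled with $\varphi\in C^2_c$.

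\textbf{Step 2 (averaging).} Localize in time with a cutoff, which adds a harmless $L^2$ source. For fixed $(\tau,k)$ and a small parameter $\alpha$, split
\[
\varphi=\varphi\,\mathbf 1_{|\tau+v\cdot k|\le\alpha|k|}+\varphi\,\mathbf 1_{|\tau+v\cdot k|>\alpha|k|},
\]
smoothed appropriately. On the first piece, Cauchy--Schwarz gives a contribution $\lesssim\alpha^{1/2}\|\widehat W\|$. On the second piece we divide by $i(\tau+v\cdot k)$ and integrate by parts in $v$ against $\nabla_v\big(\varphi/(\tau+v\cdot k)\big)$, which produces a factor $\lesssim(\alpha^{-2}|k|^{-1})^{1/2}$ times the $L^2$ norm of the force term (after the usual $L^2$ computation of $\int|\tau+v\cdot k|^{-4}|k|^2\,dv$ over the complementary region). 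Optimizing in $\alpha$ gives
\[
|\widehat{\rho_\varphi}(\tau,k)|\lesssim |k|^{-1/6}\,\big(\|\widehat W\|+\|\widehat F\|\big)(\tau,k).
\]
The exponent $1/6$ is the one arising from a source that is one $v$-derivative of $L^2$, as in \cite{golse_velocity_2025}. This estimate holds only up to the $\hbar$-remainders: the quantum finite differences in $v$ at scale $\hbar$ are not exact $v$-derivatives, and their deviation costs $\hbar^{1/6}$ after the same optimization.

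\textbf{Step 3 (conclusion).} From $|e^{ik\cdot z}-1|\le\min(2,|k||z|)$, the bound of Step 2 yields
\[
\|\Delta_z\rho\|_{L^2}\lesssim(|z|^{1/6}+\hbar^{1/6})\|W^\hbar_A\|_{L^2}.
\]
Compactness in $L^2([-T,T]\times\mathbb R^d)$ then follows by Riesz--Fréchet--Kolmogorov. Tightness in $x$ comes from applying the same argument to the $x$-localized $\chi(x)W^\hbar_A$, or from the assumed decay. Time equicontinuity comes from the equation itself, in a negative Sobolev norm, combined with the $x$-modulus (Aubin--Lions type). The vanishing term $\hbar^{1/6}$ is acceptable because a family with $\hbar\to0$ and uniform modulus $\omega(|z|)+o(1)$ is still precompact.

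\textbf{Main obstacle.} The hard step is Step 1 for the magnetic term. The phase $\Gamma_A$ makes $\Phi^\hbar$ contain the operator $\hbar\,y\times B$-type terms acting together with $\nabla_x$. We must show that, after symmetrization, these reduce to the classical Lorentz divergence plus an error controlled by $\hbar\|\nabla\times B\|_{L^\infty}\to0$. The first attempt would be a Taylor expansion of the $s$-averaged field along the segment, $B(x+s\hbar y)=B(x)+O(\hbar|y|\,\|\nabla B\|)$, keeping track of which derivative of $B$ actually appears. The aim is to check that only the curl survives, so that the hypothesis on $\hbar\nabla\times B^\hbar$ suffices.
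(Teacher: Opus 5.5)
There is a genuine gap in Step 1, at exactly the point where the magnetic problem differs from the non-magnetic one. By \eqref{eq:Phi}, $\Phi^{\hbar}W^{\hbar}_A$ contains the term $-i\hbar\,\nabla_v\cdot\mathrm{curl}_x(\Omega[B]W^{\hbar}_A)$, the Fourier image of $-\hbar(y\times\mathcal{I}_1[B])\cdot\nabla_x\tilde S$. This is a genuine first-order $x$-derivative. Its symbol grows like $\hbar|k|$, so it is unbounded on $L^2_{x,v}$, and it cannot be transferred to a test function $\chi(v)$. Under the stated hypotheses it also cannot be reduced to an $L^2$-bounded remainder by symmetrization or Taylor expansion. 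Your expansion $B(x+s\hbar y)=B(x)+O(\hbar|y|\,\|\nabla B\|)$ would in addition need $\nabla B\in L^\infty$, which is not assumed, is not uniform in $|y|$, and still leaves the $\nabla_x$. So your Step 1 bound $|\int\Phi^{\hbar}W^{\hbar}_A\chi\,dv|\lesssim\|\nabla_v\chi\|\,\|W^{\hbar}_A\|+o(1)\|W^{\hbar}_A\|$ is false, and Step 2, which only handles sources $\nabla_v\cdot F$ with $F\in L^2$, does not apply.

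The hypothesis $\hbar\nabla\times B^{\hbar}\to0$ does not remove this term. In the paper it only controls the commutator $-i\hbar\nabla_v\cdot(\Omega[\mathrm{curl}_xB]W^{\hbar}_A)$, produced when $\partial_x$ is pulled outside $\Omega[B]$. The missing idea is to \emph{keep} the term as a source $\partial_{x_k}\partial_{v_j}h_{jk}$ with $h_{jk}=-i\hbar\varepsilon_{jk\ell}\Omega[B_\ell]W^{\hbar}_A$, so that $\|h\|_{L^2}\le C\hbar\|B\|_{L^\infty}\|W^{\hbar}_A\|_{L^2}$. One then uses an averaging lemma that tolerates a full $x$-derivative (Perthame--Souganidis, Corollary~\ref{corollary AL}). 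There the multiplier $k_j(\tau+v\cdot k)/((\tau+v\cdot k)^2+\beta^2|k|^2)$ is $O(\beta^{-1})$ uniformly in $k$, so this piece costs $\beta^{-m-1/2}\|h\|_{L^2}$ with no decay in $|k|$. That is both why no regularity is gained and where the $\hbar^{1/6}$ really comes from. It does not come from finite differences in $v$: $\theta[\phi]W^{\hbar}_A=\nabla_v\cdot(L[\phi]\ast_vW^{\hbar}_A)$ holds exactly, with $L[\phi]\ast_v$ bounded on $L^2$ by $\|\phi\|_{W^{1,\infty}}$. Your cutoff at $|\tau+v\cdot k|\le\alpha|k|$ could be adapted to absorb the factor $|k|$, since $|k|\le\alpha^{-1}|\tau+v\cdot k|$ on the complement, but the proposal never does this.

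Your derivation of the exponent $\tfrac16$ is also wrong. You omit the term $i\hbar\varepsilon_{jk\ell}\varepsilon_{nkm}\partial_{v_\ell}\partial_{v_m}\Omega[B_j]\Theta[B_n]W^{\hbar}_A$, which is two $v$-derivatives of an $L^2$-bounded quantity; it is not small in $L^2$ operator norm either. It is this term that forces $m=2$ and hence the exponent $\frac{1}{2(m+1)}=\frac16$.

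For a source with a single $v$-derivative, your Step 2 integral is $\int_{|\tau+v\cdot k|>\alpha|k|}|k|^2|\tau+v\cdot k|^{-4}dv\sim\alpha^{-3}|k|^{-2}$, not $\alpha^{-2}|k|^{-1}$. Optimizing against $\alpha^{1/2}$ then gives the usual $|k|^{-1/4}$. With two integrations by parts one gets $\alpha^{-5}|k|^{-2}$, and only then $|k|^{-1/6}$.

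Once the sources are correctly identified, the paper concludes directly. They are: $\hbar$ times $\partial_x\partial_v$ of an $L^2$-bounded term; $\partial_v^2$ and $\partial_v$ of $L^2$-bounded terms; and the $\hbar\,\mathrm{curl}\,B$ commutator. The paper then uses the bound $\beta^{1/2}\|f\|+\beta^{-m-1/2}(\|h\|+|z|\,\|g\|)$, essentially, optimized in $\beta$. This gives $(\hbar+|z|)^{1/6}\|W^{\hbar}_A\|_{L^2}$.
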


We prove Theorem \ref{thm:main} in Section \ref{sec:VA} by an application of a variant of the theorem of Perthame-Souganidis in \cite{perthame_limiting_1998}, cf. Corollary \ref{corollary AL}.

\begin{remark}
    The main advantage of equation \eqref{eq:wigner caricature} is its similarity to the electromagnetic transport equation \eqref{eq: magnetic Vlasov}, from which it differs by a term of order $O(\hbar)$. As long as this term is controlled uniformly in $\hbar$, we obtain compactness of the averages in $L^2$. However, due to a full spatial derivative in $\Phi^{\hbar}[\phi,B,\partial_tA]$ we do not obtain any gain in regularity, as opposed to the non-magnetic case \cite{golse_velocity_2025, golsemöllermauser}, where one gains $1/4$ of a derivative uniformly in $\hbar$.

Moreover, it is not clear how to establish an averaging lemma for the ``minimally coupled" Wigner equation \eqref{eq: wigner moyal equation}, as it contains terms of the form
\begin{equation}
    (\xi-A(t,x))\cdot \nabla_x W^{\hbar}.
    \label{eq:TransportTermWigner}
\end{equation}
When averaging in the kinetic momentum variable $\xi$, one propagates $W^{\hbar}$ along the flow of the operator 
\begin{equation*}
    \partial_t + \xi \cdot \nabla_x,
\end{equation*}
and views $A(t,x)\cdot \nabla_x W^{\hbar}$ as a member of the right hand side. In this case however, we cannot apply the Perthame-Souganidis theorem as this term is not $O(\hbar)$, in fact it is even $O(1)$. Therefore one cannot obtain compactness. This is the main reason to use the magnetic Wigner transform. The magnetic Wigner equation has the advantage that it contains the change of variables $v=\xi-A$ already at the quantum level and \eqref{eq:TransportTermWigner} is replaced by
\begin{equation}
    v\cdot \nabla_x W^{\hbar}_{A(t,x)},
\end{equation}
while all the other terms containing a full derivative in $x$ are $O(\hbar)$, and no full spatial derivative terms that are $O(1)$ (and thus would prevent compactness) such as \eqref{eq:TransportTermWigner} are present. Compare also Remark \ref{RemarkFullDerivative}.
\end{remark}

\begin{remark}
\label{RemarkPurevMixed}
    In Theorem \ref{thm:main} we show an averaging lemma for a special class of \emph{mixed states}, excluding pure states via condition \eqref{eq:condition mixed states}; compare the discussion in Section \ref{sec:preliminaries} and Proposition \ref{thm:condition mixed states}, which uses condition \eqref{eq:condition mixed states} in order to derive a uniform $L^2$ bound for $W^{\hbar}_A$. The (im-)possibility of an application of velocity averaging to the Wigner equation corresponding to the non-magnetic Schrödinger equation with external potential was discussed in \cite{golse_velocity_2025}, where velocity averaging for a certain class of pure states converging to monokinetic Wigner measures with zeroth- and first-order moments converging strongly in $L^2$ was ruled out. 
    
    In \cite{golse_velocity_2025}, we proved an $L^2$-based averaging lemma based on the averaging lemma in \cite{diperna_global_1989}, which however does not allow for a full derivative in $x$ on the right hand side of the transport equation. This is overcome by an application of Corollary \ref{corollary AL}, which is a consequence of Theorem 1 in \cite{perthame_limiting_1998} and which includes a full derivative in $x$.
\end{remark}

\begin{remark}
    The exponent $\frac{1}{6}$ on the right-hand side of \eqref{eq:bound modulus of continuity} stems from the fact that $\Phi^{\hbar}[\phi,B,\partial_tA]$, defined in \eqref{eq:Phi}, contains a second order derivative in $v$ (albeit of order $\hbar$), such that the usual exponent in the $L^2$-based averaging lemma becomes $\frac{1}{2(m+1)} =\frac{1}{6}$ with $m=2$.

    Moreover, the term scaling with $\hbar^{\frac{1}{6}}$ comes from the term in $\Phi^{\hbar}[\phi,B,\partial_tA]$  containing the derivative in $x$, which disappears as $\hbar\rightarrow 0$. The term scaling with $|z|^{\frac{1}{6}}$ comes from the terms in $\Phi^{\hbar}[\phi,B,\partial_tA]$ containing no derivative in $x$ (but possibly derivatives in $v$) and would otherwise generate a gain in differentiability for $\rho_{\varphi}$. This is obstructed by the full spatial derivative in $\Phi^{\hbar}[\phi,B,\partial_tA]$.
\end{remark}

\begin{remark}
\label{RemarkFullDerivative}
    The requirement that $\hbar (\nabla \times B^{\hbar}) \rightarrow 0$ in $L^{\infty}(\mathbb{R}^d)$ as $\hbar \rightarrow 0$ is necessary since $\Phi^{\hbar}$ contains a term of the form $\hbar T[\nabla \times B^{\hbar}]W^{\hbar}_A$, where $T[f]$ is a pseudo-differential operator depending on a function $f$ and acting on $W^{\hbar}_A$. In fact, this term can be removed, by combining it with the term containing the full derivative in $x$. Compare the first term in \eqref{eq:Phi} which can be written as
    \begin{equation*}
        \hbar T[B^{\hbar}]\times \nabla_x W^{\hbar}_A := \hbar \epsilon_{ijk}T[B^{\hbar}_i]\partial_{x_j} W^{\hbar}_A,
    \end{equation*}
    However, in this case, we cannot apply Corollary \ref{corollary AL}, resp. Theorem \ref{thm:ALPerthameSouganidis}, since it is not clear how to control a term of the form $T\partial_x f$ instead of $\partial_x (Tf)$ if the operator $T$ depends on $x$.

    In the context of Maxwell's equations, $\nabla \times B^{\hbar}$ is equal to the sum of the current density $J^{\hbar}$ and the displacement current $\partial_t D^{\hbar}$ where $D^{\hbar}$ is the electric displacement field. The condition basically says that the current $J^{\hbar}+\partial_t D^{\hbar}$ should not blow up with rate $\frac{1}{\hbar}$ in the semiclassical regime.
\end{remark}

\subsection{Canonical versus magnetic Poisson brackets}
\label{sec:PoissonBrackets}
The electromagnetic transport equation \eqref{eq: magnetic Vlasov} can be derived from the Liouville equation
\begin{equation}
    \partial_t f = \{H,f\},
    \label{eq: Liouville}
\end{equation}
where $H$ is the minimally coupled Hamiltonian 
\begin{equation}
    H(t,x,\xi) = \frac{1}{2}|\xi-A(t,x)|^2 + \phi(t,x).
    \label{eq:magnetic Hamiltonian}
\end{equation}
Here, $\{\cdot,\cdot\}$ denotes the Poisson bracket, defined as
\begin{equation*}
    \{f,g\} := \sum_{k=1}^d \partial_{x_k}f\partial_{\xi_k}g-\partial_{\xi_k}f\partial_{x_k} g,
\end{equation*}
which is induced by the symplectic form
\begin{equation*}
    \omega := \sum_{k=1}^d d x_k \wedge d \xi_k.
\end{equation*}
Minimal coupling refers to the replacement of the kinetic momentum $\xi$ by the canonical or generalized momentum $v  =v(t,x,\xi) := \xi-A(t,x)$.

The transport equation \eqref{eq: magnetic Vlasov} is then obtained from \eqref{eq: Liouville} by a change of variables $v=\xi-A$. 
One notices that the Poisson brackets between the generalized momenta $\xi_k-A_k$ and $\xi_{\ell}-A_{\ell}$ do not vanish, instead they are given by the magnetic field:
\begin{align*}
    \{x_k,x_\ell\} = 0, && \{x_k,\xi_{\ell}-A_{\ell}\} = \delta_{k\ell}, && \{\xi_k-A_k,\xi_{\ell}-A_{\ell}\} = \epsilon_{k\ell m} B_m,
\end{align*}
where $\epsilon_{k\ell m}$ denotes the Levi-Civita symbol. Alternatively, one can define the \emph{magnetic Poisson bracket} which takes the deformation by the generalized momentum $v=\xi-A$ into account:
\begin{equation}
\label{eq:MagneticPoissonBracket}
    \{f,g\}_B := (\sum_{k=1}^d \partial_{x_k}f\partial_{v_k}g-\partial_{v_k}f\partial_{x_k}g) + \sum_{i,j,m}^d \epsilon_{ijm}B_m \partial_{v_i}f\partial_{v_j}g.
\end{equation}
More precisely, $\{f,g\}_B$ is induced by the magnetic symplectic form
\begin{equation*}
    \omega_B := \sum_{k=1}^d d x_k \wedge d v_k + \frac{1}{2}\sum_{i,j,m=1}^d \epsilon_{ijm}B_m d v_i \wedge d v_j.
\end{equation*}
By noting that
\begin{equation*}
    \{\xi_k-A_k,\xi_{\ell}-A_{\ell}\} = \{v_k,v_{\ell}\}_B,
\end{equation*}
we obtain that \eqref{eq: Liouville} is equivalent to 
the magnetic Liouville equation
\begin{equation}
    \partial_t f = \{ H',f\}_B, \quad H'= \frac{1}{2} |v|^2 + \phi.
    \label{eq:magnetic Liouville}
\end{equation}
Then \eqref{eq:magnetic Liouville} is equivalent to the electromagnetic transport equation \eqref{eq: magnetic Vlasov} without having to perform the change of variables $v=\xi-A$. Thus, in classical theory, the formulation involving the minimally coupled Hamiltonian $H=\tfrac{1}{2}|\xi-A(x)|^2+\phi(x)$ on the usual phase space $(\mathbb{R}^d_x\times \mathbb{R}^d_{\xi},\omega)$ is equivalent to the formulation involving the Hamiltonian $H'=\tfrac{1}{2}|v|^2+\phi(x)$ on the ``magnetic" phase space $(\mathbb{R}^d_x\times \mathbb{R}^d_{v},\omega_B)$.

\subsection{Quantization of magnetic Hamilton functions}
\label{sec:MagneticQuantization}
The equivalence of the canonical and magnetic Poisson brackets in classical mechanics is in general not conserved by the usual Weyl quantization of the Hamilton function $H$. The Weyl quantization of a Weyl symbol $a\colon \mathbb{R}^d_x \times \mathbb{R}^d_{\xi} \rightarrow \mathbb{R}$ is defined as
\begin{equation*}
    (\mathrm{Op}^W[a] f)(x) = \frac{1}{(2\pi)^d}\int e^{-i\frac{(x-y)\cdot \xi}{\hbar}}a\left(\tfrac{x+y}{2},\xi\right) f(y)d\xi dy.
\end{equation*} 
The Weyl quantization $\mathrm{Op}^W$ of a minimally coupled symbol $a(x,\xi)$, where $\xi$ is replaced by $\xi-A(x)$, is not gauge invariant. This was already observed by Stratonovich \cite{stratonovich_gauge_1956}.
For example, it is known that the relativistic magnetic Schrödinger operator $$H_A := \sqrt{1+(-i\hbar\nabla-A)^2} +\phi,$$ when defined as the Weyl operator $\mathrm{Op}^W[P]$ with symbol
\begin{equation*}
    P(x,\xi) = \sqrt{1+|\xi-A(x)|^2} +\phi(x),
\end{equation*}
is \emph{not} gauge invariant \cite[Proposition 2.8]{ichinose_magnetic_2013}. Here, gauge invariance means that for any real-valued $C^{1}$ function $\chi$ it holds that
\begin{equation*}
    H_{A+\nabla \chi} = e^{i\chi}H_A e^{-i\chi}.
\end{equation*}

Thus, instead of a minimal coupling of the Weyl symbol, one has to modify the quantization itself. The usual Weyl quantization of the Hamiltonian \eqref{eq:magnetic Hamiltonian} is given by the position and momentum operators
\begin{align}
\label{eq:canonical operators}
Q_k\psi(x) :=x_k\psi(x) && V_k\psi(x):=\left(-i\hbar\partial_{x_k}-A_k(x)\right)\psi(x),
\end{align}
which obey the canonical commutation relations
\begin{equation*}
[Q_k,Q_\ell]=0\,,\quad[Q_k,V_\ell]=i\hbar\delta_{k\ell}\,,\quad[V_k,V_\ell]=i\hbar(\partial_kA_\ell-\partial_\ell A_k)=i\hbar B_j\epsilon_{jk\ell}.
\end{equation*}
Then the quantization of the Hamiltonian \eqref{eq:magnetic Hamiltonian} is given by
\begin{equation*}
    \hat{H} = \frac{1}{2}V^2+\phi = \frac{1}{2}(-i\hbar\nabla-A)^2 + \phi.
\end{equation*}
A density operator $R$ on a Hilbert space $\mathfrak{H}$ (cf. Section \ref{sec:preliminaries}) is propagated by the von Neumann equation
\begin{equation}
    i\hbar R(t) = [\hat{H},R(t)], \qquad R(0) = R^{\text{in}}.
    \label{eq: von Neumann magnetic}
\end{equation}
Switching to a phase space description, we define the \emph{non-magnetic} Wigner transform of the density operator $R$:
\begin{equation}
    W^{\hbar}(t,x,\xi) := \frac{1}{(2\pi)^d} \int e^{-i \xi \cdot y} R(t,x+\tfrac{\hbar y}{2},x-\tfrac{\hbar y}{2}) d y.
\end{equation}
Here, $R(t,x,y)$ denotes the kernel of $R$. The evolution of $W^{\hbar}$ is governed by the Wigner equation (or quantum Liouville equation)
\begin{equation}
    i\hbar \partial_t W^{\hbar} = \{\{H,W^{\hbar}\}\}, \quad H(t,x,\xi) =\frac{1}{2}|\xi-A(t,x)|^2 + \phi(t,x),
\end{equation}
The Wigner equation \eqref{eq: wigner moyal equation} is the quantum analogue of the minimally coupled Liouville equation \eqref{eq: Liouville}. As mentioned above, \eqref{eq: Wigner non magnetic} is in general not gauge invariant under a gauge transformation $A\rightarrow A+ \nabla \chi$. Note however that in the important subclass of quadratic Hamiltonians the non-magnetic Wigner transform $W^{\hbar}$ is indeed gauge-invariant. In fact, this holds for all Hamiltonians that are polynomials of $\xi$ of order \emph{at most} 2, cf. \cite[pp. 29-30]{lein_2011_thesis}.

When studying the semiclassical limit of the ``minimally coupled" Wigner equation \eqref{eq: wigner moyal equation}, one needs to take limits in terms such as $\nabla|A|^2$, whereas in the transport equation \eqref{eq: magnetic Vlasov}, only the curl of $A$ (i.e. the magnetic field $B$) appears. It is therefore desirable  to find a quantum analogue of the magnetic Liouville equation \eqref{eq:magnetic Liouville} that involves only $B = \nabla \times A$ (instead of the full gradient $\nabla A$) and which is gauge invariant. Then one can pass to the limit $\hbar \rightarrow 0$ and obtain the electromagnetic transport equation \eqref{eq: magnetic Vlasov} directly, without having to perform the change of variables $v=\xi-A$ at the classical level. \\

The \emph{gauge-invariant} or \emph{magnetic Weyl quantization} \cite{lein_2011_thesis, mantoiu_magnetic_2004,muller_product_1999,  stratonovich_gauge_1956} is now defined as the integral operator $\mathrm{Op}^W_A[a]$ with symbol $a$ and kernel $K_A[a]$
\[
K_A[a](X,Y):=\frac{1}{{(2\pi\hbar)^d}}e^{-\frac{i}\hbar\Gamma_A[X,Y]}\int_{\mathbb R^d}a(\tfrac{X+Y}{2}, v)e^{i v\cdot \frac{X-Y}{\hbar}}{d v}.
\]
Therefore
\begin{align*}
\frac{1}{(2\pi)^d}\int_{\mathbb R^d}&K_A[a](x+\tfrac\hbar{2}y,x-\tfrac\hbar{2}y)e^{\frac{i}\hbar\Gamma_A[x+\frac\hbar{2}y,x-\frac\hbar{2}y]}e^{-iv\cdot y}d y
\\
&=\frac{1}{{(2\pi\hbar)^d}}\int_{\mathbb R^{d}}{a(x, w)}\left(\frac{1}{(2\pi)^d}\int_{\mathbb R^d}e^{i(w-v)\cdot y}d y\right)dw
\\
&=\frac{f(x,v)}{(2\pi\hbar)^d}.
\end{align*}
The magnetic Wigner transform \eqref{eq: Wigner magnetic} is defined in such a way that
\begin{equation}
W^{\hbar}_A[\mathrm{Op}^W_A[a]]=\frac1{(2\pi\hbar)^d}a.
\end{equation}
The magnetic Weyl quantization is now gauge-invariant, i.e.
\begin{equation*}
    \mathrm{Op}^W_{A+\nabla \chi}[a] = e^{i\chi}\mathrm{Op}^W_{A}[a] e^{-i\chi}.
\end{equation*}
for $\chi \in C^1$ real-valued.



\subsection{Organization of the article}

In Section \ref{sec:preliminaries} we define density operators, Weyl variables and discuss the mixed state condition \eqref{eq:condition mixed states}. In Section \ref{sec:magnetic wigner} we define the magnetic Wigner transform (or magnetic Wigner function) and prove that (under suitable conditions on the magnetic potential $A$ and the density operator) the magnetic Wigner function is uniformly bounded in $L^2$ and converges to a Wigner measure. In Section \ref{sec:Magnetic von Neumann and Wigner equations}, we derive the magnetic von Neumann and Wigner equations and introduce the respective notation, in particular we define the operator $\Phi^{\hbar}$. In Section \ref{sec:VA}, we prove Theorem \ref{thm:main}. We prove a variant (Corollary \ref{corollary AL}) of the velocity averaging lemma due to Perthame-Souganidis \cite{perthame_limiting_1998}, which includes a full derivative in $x$. We then use Corollary \ref{corollary AL} to finalize the proof of Theorem \ref{thm:main}.

\section{Preliminaries and notation}
\label{sec:preliminaries}
Let $\mathfrak{H}$ be a Hilbert space. The algebra of bounded operators on $\mathfrak{H}$ is denoted by $\mathcal{L}(\mathfrak{H)}$. We denote the two-sided ideal of trace-class operators on $\mathfrak{H}$ 
by $\mathcal{L}^1(\mathfrak{H})$. A \emph{density matrix} $R\in \mathcal{L}^1(\mathfrak{H})$ is a self-adjoint ($R=R^*$), nonnegative ($\langle R\psi,\psi\rangle \geq 0$ for all $\psi \in \mathfrak{H}$) operator 
with trace $\tr_{\mathfrak{H}} R = 1$. A rank-one density operator is a \emph{pure state}.  If $\mathfrak{H} = L^2(\mathbb{R}^d)$, the operator $R$ is an integral operator 
with integral kernel $(X,Y)\mapsto R(X,Y)$ which belongs to $L^2(\mathbb{R}^d\times \mathbb{R}^d)$. For the sake of simplicity, we shall abusively denote by $R(X,Y)$ the integral kernel of the density operator
$R$ --- so that $R$ designates both an operator on $L^2(\mathbb{R}^d)$ and an element of $L^2(\mathbb{R}^d\times \mathbb{R}^d)$. Since $R=R^*$ is trace-class, it is compact and by the spectral theorem 
there is a sequence of real eigenvalues $(\lambda_j)_{j\ge 1}$ and a Hilbert basis (i.e. a complete orthonormal system) $\{\psi_j\,:\,j\ge 1\} \subset \mathfrak{H}$ such that
\begin{equation}\label{SpecDecR}
    R(X,Y) = \sum_{j=1}^{\infty} \lambda_j \psi_j(X) \overline{\psi_j(Y)}.
\end{equation}
Since $R$ is a density operator,
\begin{equation}\label{CondLambda}
\lambda_j\ge 0\quad\text{since }R=R^*\ge 0,\quad\text{ and }\sum_{j\ge 1}\lambda_j=\tr_{L^2(\mathbb R^d)}R=1.
\end{equation}
In particular, the restriction of the integral kernel to the diagonal, 
i.e. $R(X,X)$, is a well-defined element of $L^1(\mathbb{R}^d_X)$ and we can define the density function $\rho\in L^1(\mathbb{R}^d)$ as 
\begin{equation}\label{DefDensityFunc}
    \rho(X) := R(X,X).
\end{equation}
We recall that the trace of $R$ and $R^2$ are given by
\begin{align*}
    \tr_{L^2(\mathbb R^d)}(R) &= \int_{\mathbb{R}^d} \rho(X) d X = \sum_{j\geq 1}\lambda_j, \\ \tr_{L^2(\mathbb R^d)}(R^2) &= \iint_{\mathbb{R}^d\times \mathbb{R}^d} |R(X,Y)|^2 d X dY= \sum_{j\geq 1}\lambda_j^2.
\end{align*}
Note that condition \eqref{eq:condition mixed states} essentially rules out density matrices corresponding to pure states, i.e. rank-one projections. Indeed,
by the Cauchy-Schwarz inequality,
\begin{equation*}
    1 = \left( \sum_{j\geq 1}\lambda_j\right)^2 \leq \sum_{j\geq 1} \mathrm{1}_{\lambda_j>0} \sum_{j\geq 1}\lambda_j^2 \leq C(2\pi\hbar)^d\cdot\mathrm{rank}(R),
\end{equation*}
so that
\begin{equation*}
    \mathrm{rank}(R) \geq \frac{1}{C(2\pi\hbar)^d}.
\end{equation*}
Thus pure states (corresponding to $\mathrm{rank}(R)=1$) are ruled out for $L^2$-based velocity averaging in the semiclassical regime, cf. Remark \ref{RemarkPurevMixed}.

The Weyl variables $(x,y)$ are defined as
\begin{align}
x:=\frac12(X+Y) && y:=\frac1\hbar(X-Y)
\label{eq:weyl variables}
\end{align}
so that
\[
X=x+\frac\hbar{2}y\,,\qquad Y=x-\frac\hbar{2}y.
\]
The terminology \emph{Weyl variables} is not standard and was used in \cite{filbet_approximation_2025}.
We denote all quantities in Weyl variables $(x,y)$ by a tilde, i.e.
\begin{align*}
    R(X,Y) = \tilde{R}(x,y), && \Gamma_A[X,Y] = \widetilde{\Gamma_A}[x,y], && S(X,Y) = \tilde{S}(x,y).
\end{align*}
The completely antisymmetric Levi-Civita symbol is defined as
\begin{equation}
    \varepsilon_{ijk} = \begin{cases}
        1 \quad \text{if } (ijk) \text{ is an even permutation}, \\
        -1 \quad \text{if } (ijk) \text{ is an odd permutation}, \\
        0 \quad \text{otherwise}.
    \end{cases}
\end{equation}
The cross product $a \times b$ is then defined as
\begin{equation}
    (a \times b)_i = \sum_{j,k} \varepsilon_{ijk}a_jb_k.
\end{equation}

\section{Magnetic Wigner function}
\label{sec:magnetic wigner}
    Let $A\in \mathbb{R}^d$ be a vector field. Recall the circulation of $A$ along the line segment $[x,y]$ 
    \begin{equation}
        \label{eq:CirculationA}\Gamma_A[X,Y]:=\int_0^1A((1-s)X+sY)\cdot (Y-X) d s.
    \end{equation}
    The magnetic Wigner transform can be motivated as follows. 
\begin{definition}
    Let $K$ be an $L^2$-kernel. Let $\Gamma_A[X,Y]$ be the circulation of the vector field $A$ through the line segment $[X,Y]$, as in \eqref{eq:CirculationA}. The \emph{magnetic Wigner transform} is defined as 
    \begin{equation}
    \label{eq:magnetic wigner transform}
        W^{\hbar}_A[K](x,v):=\frac{1}{(2\pi)^d}\int_{\mathbb R^d}K(x+\tfrac\hbar{2}y,x-\tfrac\hbar{2}y)e^{\frac{i}\hbar\Gamma_A[x+\frac\hbar{2}y,x-\frac\hbar{2}y]}e^{-iv\cdot y}d y.
    \end{equation}
\end{definition}

   \begin{proposition}
   \label{thm:condition mixed states}
   Let $A\in \dot{H}^1(\mathbb{R}^d)$. Let $R\in \mathcal{L}^1(L^2(\mathbb{R}^d))$ be a density matrix with Wigner transform $W^{\hbar}[R](x,\xi)$ and corresponding Wigner measure $W(x,\xi)$. Suppose that $R$ satisfies
\begin{equation}
    \|R\|_{L^2}^2 = \sum_{j=1}^{\infty} (\lambda^{\hbar}_j)^2 \leq (2\pi\hbar)^d.
    \label{eq:condition mixed states proposition}
\end{equation}Then
   \begin{enumerate}[label=(\roman*)]
       \item The magnetic Wigner transform $W^{\hbar}_A[R](x,v)$ converges in $\mathcal{S}'$ to the Wigner measure $W(x,v+A(x))$.
       \item The magnetic Wigner transform satisfies the uniform bound
\begin{equation}
\|W_A^{\hbar}\|_{L^2_{x,v}}\leq C,
\label{eq:bound Wigner L^2}
\end{equation}
where $C$ is independent of $\hbar$. Thus, the magnetic Wigner transform $W^{\hbar}_A$ also converges weakly in $L^2_{x,v}$.
   \end{enumerate}
\begin{proof}

Recall the non-magnetic Wigner transform, cf. \eqref{eq: Wigner non magnetic}:
\begin{equation}
    W^{\hbar}(x,\xi) = \frac{1}{(2\pi)^d} \int_{\mathbb{R}^d} e^{-i \xi \cdot y} R(x+\tfrac{\hbar y}{2},x-\tfrac{\hbar y}{2}) d y.
\end{equation}
The non-magnetic Wigner transform $W_{\hbar}$ of a positive trace class operator $R$ converges weakly in $\mathcal{S}'(\mathbb{R}^d_x \times \mathbb{R}^d_{\xi})$ to a positive Radon measure $W$ on $\mathbb{R}^d_x\times \mathbb{R}^d_{\xi}$ (after extracting a subsequence), cf. \cite[Proposition III.1 and Theorem III.1]{lions_sur_1993}. Now observe, by introducing $v=\xi-A(x)$, that for all $\varphi \in \mathcal{S}(\mathbb{R}^d_x \times \mathbb{R}^d_{\xi})$,
\begin{align*}
    \langle \varphi(x,\xi), W(x,\xi)\rangle &=\langle \varphi(x,v+A(x)), W(x,v+A(x))\rangle \\ &= \lim_{\hbar\rightarrow 0}\iint_{\mathbb{R}^{2d}} d x d v \varphi(x,v+A(x)) \left(\int_{\mathbb{R}^{d}} e^{-i(v+A(x))\cdot y} \tilde{R}_{\hbar}(x,y) dy\right)dxdv\\ &= \lim_{\hbar\rightarrow 0} \iint_{\mathbb{R}^{2d}} \mathcal{F}_{v\rightarrow y}[\varphi(x,\cdot+A(x))](y)  e^{-iy \cdot A(x)}\tilde{R}_{\hbar}(x,y) dxdy. 
\end{align*}
Thus we have
\begin{align*}
    &\langle  \varphi(x,v+A(x)),W^{\hbar}_A(x,v)-W^{\hbar}(x,v+A(x))\rangle \\ &\qquad \qquad= \iint_{\mathbb{R}^{2d}}\varphi(x,v+A(x)) \left(\int_{\mathbb{R}^{d}} \left(e^{-iv\cdot y} e^{\frac{i}{\hbar}\widetilde{\Gamma}_A[x,y]}-e^{-i(v+A(x))\cdot y}\right)\tilde{R}_{\hbar}(x,y) dy\right) dx dv \\  &\qquad \qquad= \iint_{\mathbb{R}^{2d}} \widehat{\varphi}(x,y)  \left(e^{\frac{i}{\hbar}\widetilde{\Gamma}_A[x,y]-iy \cdot A(x)}-1\right)\tilde{R}_{\hbar}(x,y) dxdy \\
    &\qquad \qquad= \iint_{\mathbb{R}^{2d}} \widehat{\varphi}(x,y) \left( e^{iy\cdot \int_{-1/2}^{1/2}A(x+\hbar s y)-A(x)ds}-1\right)\tilde{R}_{\hbar}(x,y)dxdy.
\end{align*}
Taking absolute values and using that $\|\tilde{R}_{\hbar}\|_{L^2_{x,y}} \leq C$ (by condition \eqref{eq:condition mixed states proposition}) we obtain by Cauchy-Schwarz
\begin{align*}
    &\left|\langle  \varphi(x,v+A(x)),W^{\hbar}_A(x,v)-W^{\hbar}(x,v+A(x))\rangle\right| \\ &\qquad \qquad \leq \|\tilde{R}_{\hbar}\|_{L^2_{x,y}}\left(\iint_{\mathbb{R}^{2d}} |\widehat{\varphi}(x,y)|^2 \left| e^{iy\cdot \int_{-1/2}^{1/2}A(x+\hbar s y)-A(x)ds}-1\right|^2 dx dy\right)^{\frac{1}{2}} \\
    &\qquad \qquad \leq C\left(\iint_{\mathbb{R}^{2d}} |\widehat{\varphi}(x,y)|^2 |y|^2\left|\int_{-1/2}^{1/2}A(x+\hbar s y)-A(x)ds\right|^2 dx dy\right)^{\frac{1}{2}} \\
    &\qquad \qquad \leq C\left(\int_{\mathbb{R}^{d}}\left(\sup_{x} |y|^2|\widehat{\varphi}(x,y)|^2\right) \left(\int_{\mathbb{R}^{d}}\left|\int_{-1/2}^{1/2}A(x+\hbar s y)-A(x)ds\right|^2 dx \right)dy \right)^{\frac{1}{2}}
    \\
    &\qquad \qquad \leq C\left(\int_{\mathbb{R}^{d}}\left(\sup_{x} |y|^2|\widehat{\varphi}(x,y)|^2\right) \left(\int_{-1/2}^{1/2}\left(\int_{\mathbb{R}^{d}}\left|A(x+\hbar s y)-A(x)\right|^2  dx\right)ds\right)dy \right)^{\frac{1}{2}}
\end{align*}
Now since $A\in \dot{H}^{1}(\mathbb{R}^d)$ we have
\begin{align*}
    \int_{-1/2}^{1/2}\left(\int_{\mathbb{R}^{d}}\left|A(x+\hbar s y)-A(x)\right|^2  dx\right)ds &= \int_{-1/2}^{1/2}\left(\int_{\mathbb{R}^{d}}|\widehat{A}(\xi)|^2\left|e^{-i\hbar s y\cdot \xi}-1\right|^2  d\xi\right)ds\\ &\leq  \int_{-1/2}^{1/2}\left(\int_{\mathbb{R}^{d}}\left(\hbar |s||y||\xi|\right)^2|\widehat{A}(\xi)|^2 d\xi\right)ds\\&= \tfrac{1}{12}\hbar^2 |y|^2\|A\|^2_{\dot{H}^1}.
\end{align*}
Thus,
\begin{align*}
    \left|\langle  \varphi(x,v+A(x)),W^{\hbar}_A(x,v)-W^{\hbar}(x,v+A(x))\rangle\right| &\leq C \hbar \|A\|_{\dot{H}^1} \sup_x \left(\int_{\mathbb{R}^{d}} |y|^4 |\widehat{\varphi}(x,y)|^2 dy\right)^{\frac{1}{2}} \\
    &\leq C \hbar \|A\|_{\dot{H}^1} \|\varphi\|_{L^{\infty}_x\dot{H}^2_v}.
\end{align*}
Therefore $W^{\hbar}_A(x,v) -W^{\hbar}(x,v+A(x))$ converges to zero in $\mathcal{S}'$ as $\hbar \rightarrow 0$ and the first statement is proved.

For the statement about the $L^2$-norm we have
\begin{equation}
     \|W^{\hbar}_A\|^2_{L^2_{x,v}} = \frac{1}{(2\pi)^d} \iint \left| e^{-i\widetilde{\Gamma}_A(x,y)} \tilde{R}(x,y)\right|^2 d x d y = \frac{1}{({2\pi})^d}\|\tilde{R}\|_{L^2_{x,y}} =\frac{1}{(2\pi\hbar)^d}  \|R\|_{L^2_{X,Y}} \leq C.
\end{equation}
This concludes the proof.
\end{proof}
\end{proposition} 

\subsection{Magnetic von Neumann and Wigner equations}
\label{sec:Magnetic von Neumann and Wigner equations}
Let $R$ be a density matrix and define
\begin{equation}
    \label{eq:weyl-stratonovich}S(X,Y):=R(X,Y)e^{\frac{i}\hbar\Gamma_A[X,Y]}.
\end{equation}
Expression \eqref{eq:weyl-stratonovich} is also known as \emph{Weyl-Stratonovich transform}, cf. \cite{nedjalkov_wigner_2019}.
In Weyl variables \eqref{eq:weyl variables} we have
\begin{equation}
\label{eq:weyl-stratonovich_tilde}
    \tilde{S}(x,y):=\tilde{R}(x,y)e^{\frac{i}\hbar\widetilde{\Gamma_A}[x,y]}.
\end{equation}
Recall that the magnetic Wigner transform \eqref{eq:magnetic wigner transform} is given by
\begin{equation*}
    W_A^{\hbar}[R](x,v)=\frac{1}{(2\pi)^d}\mathcal F_{y\to v}[\tilde S(x,\cdot)](v).
\end{equation*}
We define the differential operators
\begin{align}
    \mathcal{D}_{1k} &:= i\partial_{y_k} - \hbar (y \times \mathcal{I}_1[B])_k, & \mathcal{I}_1[f] &:= \int_{-1/2}^{1/2} f(x+s\hbar y) s d s, \label{eq:DiffOpCurved1}\\
    \mathcal{D}_{2k} &:= \partial_{x_k} + i (y \times  \mathcal{I}_2[B])_k, & \mathcal{I}_2[f] &:= \int_{-1/2}^{1/2} f(x+\sigma \hbar y) d \sigma. \label{eq:DiffOpCurved2}
\end{align}
We also define the pseudo-differential operators $\Theta$, $\Omega$ and $\theta$,
\begin{align}
\begin{cases}
    \Theta[f]\psi := \mathcal{F}_{y\rightarrow v}\left[\mathcal{I}_2[f]\right]\ast_v \psi, \\
    \Omega[f]\psi := \mathcal{F}_{y\rightarrow v}\left[\mathcal{I}_1[f] \right]\ast_v \psi, \\
    \theta[f]\psi := \mathcal{F}_{y\rightarrow v}\left[\tfrac{1}{i\hbar}({f(x+\tfrac{\hbar}{2}) - f(x-\tfrac{\hbar}{2})})\right]\ast_v \psi.
    \end{cases}\label{eq:def theta}
\end{align}
Note that the pseudo-differential operator $\theta$ was already defined in \cite{lions_sur_1993, markowich_classical_1993, moller_pauli-poisson_2025} and is only used for the electric potential $\phi$, while the pseudo-differential operators $\Theta$ and $\Omega$ are due to the presence of the magnetic field $B$.
We deduce the following magnetic von Neumann- and magnetic Wigner equations. For the proof of Proposition \ref{thm:magnetic equations} we refer to Appendix \ref{app:derivation magnetic equations}.
\begin{proposition}
\label{thm:magnetic equations} Let $R$ be a solution of the von Neumann equation \eqref{eq: von Neumann magnetic}. Then the Weyl-Stratonovich transform $S$
of $R$, cf. \eqref{eq:weyl-stratonovich}, satisfies the magnetic von Neumann equation
\begin{align}
\begin{cases}
   \partial_t \tilde{S} =  \mathcal{D}_{1k} \mathcal{D}_{2k} \tilde{S} + \frac{1}{i\hbar}\left[{\phi\left(x+\frac{\hbar}{2}\right) - \phi\left(x-\frac{\hbar}{2}\right)}\right]\tilde{S}-i y \cdot \mathcal{I}_2[\partial_t A]\tilde{S}, \\ \tilde{S}|_{t=0} = R^{\mathrm{in}}\exp(\tfrac{i}{\hbar}\widetilde{\Gamma_{A^{\mathrm{in}}}}),
   \end{cases}
   \label{eq:magnetic von Neumann}
\end{align}
where $A^{\mathrm{in}} = A|_{t=0}$ and where
\begin{equation*}
    [\mathcal{D}_{1k}, \mathcal{D}_{2k}] = 0.
\end{equation*}
Let $W^{\hbar}_A = W^{\hbar}_A[R]$ be the magnetic Wigner transform of $R$. Then the magnetic Wigner equation is given by
\begin{align}
\label{eq:magnetic Wigner}
    (\partial_t +v\cdot \nabla_x) W^{\hbar}_A 
    &= \Phi^{\hbar}[\phi,B,\partial_tA]W^{\hbar}_A,
\end{align}
where
\begin{align}
\begin{split}
    \Phi^{\hbar}[\phi,B,\partial_t A]W^{\hbar}_A &:= -i\hbar \nabla_v \cdot \left(\mathrm{curl}_x (\Omega[B] W^{\hbar}_A) - \Omega[\mathrm{curl}_x B] W^{\hbar}_A\right) \\ &\qquad +i\hbar \varepsilon_{jk\ell}\varepsilon_{nkm} \partial_{v_{\ell}} \partial_{v_{m}} \Omega[B_j]\Theta[B_n]W_A^{\hbar} \\
    &\qquad 
    -\nabla_v \cdot (v \times \Theta[B])W^{\hbar}_A) 
    -\theta[\phi]W^{\hbar}_A + \nabla_v\cdot(\Theta[\partial_t A]W^{\hbar}_A).
\end{split}
\label{eq:Phi}
\end{align}
where $\Theta$, $\Omega$ and $\theta$ are defined in \eqref{eq:def theta} and $\varepsilon_{ijk}$ denotes the completely antisymmetric Levi-Civita symbol.
\end{proposition}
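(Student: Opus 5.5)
The plan is a direct computation starting from the von Neumann equation \eqref{eq: von Neumann magnetic}, $i\hbar\partial_t R=[\hat H,R]$ with $\hat H=\tfrac12(-i\hbar\nabla-A)^2+\phi$. At the level of kernels this reads
\[
i\hbar\partial_t R(X,Y)=\Big(\tfrac12(-i\hbar\nabla_X-A(X))^2-\tfrac12(i\hbar\nabla_Y-A(Y))^2+\phi(X)-\phi(Y)\Big)R(X,Y).
\]
I will conjugate this equation by the phase $e^{\frac{i}{\hbar}\Gamma_A[X,Y]}$. Put $S=Re^{\frac i\hbar\Gamma_A}$. Then $(-i\hbar\nabla_X-A(X))R=e^{-\frac i\hbar\Gamma_A}\big(-i\hbar\nabla_X-A(X)+\nabla_X\Gamma_A\big)S$, and similarly in $Y$. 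The first key step is the classical formula for the gradient of the circulation along a straight segment (Stokes / Poincaré lemma):
\[
\nabla_X\Gamma_A[X,Y]=-A(X)+\int_0^1 s\,\big((Y-X)\times B((1-s)X+sY)\big)\,ds\quad(\text{up to sign conventions}),
\]
with the analogous formula for $\nabla_Y$. These identities express the shifted momenta purely through $B$ integrated along the segment. The time derivative is handled the same way: $\partial_t S$ picks up $\frac{i}{\hbar}\partial_t\Gamma_A\,S=\frac i\hbar\Gamma_{\partial_tA}S$.

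Next I pass to the Weyl variables \eqref{eq:weyl variables}, using $\nabla_X=\tfrac12\nabla_x+\tfrac1\hbar\nabla_y$ and $\nabla_Y=\tfrac12\nabla_x-\tfrac1\hbar\nabla_y$. The segment $[X,Y]$ is $\{x+\sigma\hbar y:\sigma\in[-\tfrac12,\tfrac12]\}$, so $Y-X=-\hbar y$, and the weights $s$ become $\tfrac12-\sigma$. Consequently the magnetic corrections to the two momenta take the form $\mathcal I_2[B]\pm 2\mathcal I_1[B]$ crossed with $y$. Writing $P_X,P_Y$ for the conjugated momenta, the difference of squares factors as $\tfrac12(P_X^2-P_Y^2)=\tfrac12(P_X-P_Y)\cdot(P_X+P_Y)+\tfrac12[\,P_X,P_Y\,]$-type terms. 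I expect that $P_X+P_Y\propto \mathcal D_{1}$-type combinations and $P_X-P_Y\propto\mathcal D_2$-type combinations, so that after dividing by $i\hbar$ one gets $\mathcal D_{1k}\mathcal D_{2k}\tilde S$. The electric term gives $\frac1{i\hbar}(\phi(x+\tfrac\hbar2y)-\phi(x-\tfrac\hbar2y))$, and the time term gives $-iy\cdot\mathcal I_2[\partial_tA]$.

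The commutator $[\mathcal D_{1k},\mathcal D_{2k}]=0$ (summed over $k$) must then be checked by hand. It reduces to cancellations of the form $y\cdot(\nabla\times\cdot)$ and $\mathrm{div}\,B=0$, together with $\varepsilon_{ijk}y_iy_j=0$. I expect this check, and matching signs in the Stokes formula, to be the \textbf{main obstacle}: the factorization only closes if the cross terms in $\mathcal I_1,\mathcal I_2$ cancel exactly.

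Finally I Fourier transform in $y\to v$, using $W_A^\hbar=(2\pi)^{-d}\mathcal F_{y\to v}\tilde S$. The dictionary is: $y_k\mapsto i\partial_{v_k}$; $i\partial_{y_k}\mapsto v_k$ (up to sign); multiplication by a function $g(x,y)$ becomes $\mathcal F[g]\ast_v$, which gives the operators $\Theta,\Omega,\theta$; and $\partial_{x_k}$ stays $\partial_{x_k}$ but, when applied to $\mathcal I_j[B]$, produces $\mathcal I_j[\partial_xB]$. Expanding $\mathcal D_{1k}\mathcal D_{2k}$ yields four products, each mapped to a term of \eqref{eq:Phi}:
\begin{itemize}
\item $i\partial_{y_k}\partial_{x_k}$ gives $-v\cdot\nabla_x$, the transport term.
\item $i\partial_{y_k}\,i(y\times\mathcal I_2[B])_k$ gives the Lorentz term $-\nabla_v\cdot(v\times\Theta[B]W)$.
\item $-\hbar(y\times\mathcal I_1[B])_k\partial_{x_k}$, after moving $\partial_x$ outside, gives the curl term $-i\hbar\nabla_v\cdot(\mathrm{curl}_x\Omega[B]W-\Omega[\mathrm{curl}B]W)$.
\item $-i\hbar(y\times\mathcal I_1[B])\cdot(y\times\mathcal I_2[B])$ gives the second-order $\partial_{v_\ell}\partial_{v_m}\Omega\Theta$ term.
\end{itemize}
The bookkeeping of $\varepsilon$-identities is routine once the von Neumann form is established.
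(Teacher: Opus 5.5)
Your proposal is essentially the paper's proof: conjugate the kernel form of the von Neumann equation by $e^{\frac{i}{\hbar}\Gamma_A}$, rewrite the conjugated momenta through $B$ integrated along the segment using the derivatives of the circulation, factor into $\mathcal D_{1k}\mathcal D_{2k}$ in Weyl variables, and Fourier transform the four products term by term. The paper factors before conjugating instead (using only that the $X$- and $Y$-momenta commute), and since conjugation preserves that commutator, $[\mathcal D_{1k},\mathcal D_{2k}]=0$ holds automatically for each $k$ without $\mathrm{div}\,B=0$, so the step you flag as the main obstacle is not one.

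Fix two slips before computing. First, conjugation yields $-i\hbar\nabla_X-A(X)-\nabla_X\Gamma_A$, not $+\nabla_X\Gamma_A$. Second, the weight in $\nabla_X\Gamma_A$ is $1-s$ rather than $s$ for the parametrization $(1-s)X+sY$. Only with both corrections do the $A(X)$ terms cancel and does $\mathcal I_1[B]$ enter $\mathcal D_{1k}$ with the sign stated in the proposition.
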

\begin{remark}
    In order for the vector potential $A$ to be well-defined at $t=0$ one needs to impose some regularity. The conditions of Theorem \ref{thm:main} imply that $A\in W^{1,\infty}(\mathbb{R}_t,Y)$, where $Y = \{f\in L^{\infty}(\mathbb{R}^d,\mathbb{R}^d)\colon \nabla \times f \in L^{\infty}(\mathbb{R}^d,\mathbb{R}^d)\}$. The fact that $A$ is Lipschitz in $t$ with values in $Y$ imply that $A^{\mathrm{in}} = A|_{t=0}$ is well-defined and so is the Weyl-Stratonovich transform $S$ at $t=0$.
\end{remark}
\begin{remark}Note that in the magnetic Wigner equation \eqref{eq:magnetic Wigner}, apart from the electric potential $\phi$, only the magnetic field $B_j(x):= \varepsilon_{jmn}\partial_{x_m} A_n(x)$ and the time derivative $\partial_t A$ appear. 

Note also that $\varepsilon_{jk\ell}\varepsilon_{nkm} = \delta_{\ell m}\delta_{jn} - \delta_{\ell n} \delta_{jm}$. Then the term involving the second order derivative in $v$ in \eqref{eq:Phi} becomes
\begin{align*}
    &i\hbar (\delta_{\ell m}\delta_{jn} - \delta_{\ell n} \delta_{jm}) \partial_{v_{\ell}} \partial_{v_{m}} \Omega[B_j]\Theta[B_n]W_A^{\hbar}(x,v) \\&\qquad  = i\hbar\left(\Delta_v (\Omega[B] \cdot \Theta[B]W_A^{\hbar}(x,v)) -\partial_{v_j}\partial_{v_n}\Omega[B_j]\Theta[B_n]W_A^{\hbar}(x,v)\right).
\end{align*}
\end{remark}

\section{Velocity averaging for the magnetic Wigner equation}
\label{sec:VA}

We recall the following averaging lemma including a full derivative in $x$. Here, $\alpha$ denotes a multi-index and $h=(h_1,...,h_d)$ denotes a vector-valued function on $\mathbb{R}_t \times \mathbb{R}^d_x \times \mathbb{R}^d_{v}$.
\begin{theorem}[{\cite[Theorem 1]{perthame_limiting_1998},\cite[Theorem 1.7]{bouchut_kinetic_2000}}]
\label{thm:ALPerthameSouganidis}
    Suppose that $f\colon \mathbb{R}_t \times \mathbb{R}^d_x \times \mathbb{R}^d_{v} \rightarrow \mathbb{R}$ solves the transport equation
    \begin{equation*}
        (\partial_t + v\cdot \nabla_x)f = \sum_{j,|\alpha|\leq m}\partial_{x_j}\partial_{v}^{\alpha}h_j,
    \end{equation*}
    Let $\varphi \in C^{m}_c(B_R)$. There exists $C\equiv C[R,\varphi]$ such that the averages in velocity
    \begin{equation*}
        \rho_{\varphi}(f)(t,x) := \int_{\mathbb{R}^{d}} f(t,x,v)\varphi(v) d v
    \end{equation*}
    satisfy the bound
    \begin{equation*}
        \|\rho_{\varphi}(f)\|_{L^p_{t,x}} \leq C \|f\|_{L^p_{t,x,v}}^{1-\frac{1}{(m+1)p'}}\|h\|_{L^p_{t,x,v}}^{\frac{1}{(m+1)p'}}, \qquad 1<p<\infty.
    \end{equation*}
\end{theorem}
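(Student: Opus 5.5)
We plan to prove the estimate first for $p=2$, which is the only case used later (in Corollary \ref{corollary AL} and Theorem \ref{thm:main}), by a Fourier-space argument. The case $1<p<\infty$ would follow the same decomposition, with Fourier multiplier estimates replacing Plancherel, as in \cite{bouchut_kinetic_2000}.

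\emph{Fourier transform of the equation.} Take the Fourier transform in $(t,x)\mapsto(\tau,k)$. The equation becomes
\[
i(\tau+v\cdot k)\hat f=\sum_{j,|\alpha|\le m} ik_j\,\partial_v^\alpha \hat h_j .
\]
Write $\omega=k/|k|$ and $s=\tau/|k|$. Fix a parameter $\delta>0$ and a smooth cutoff $\psi$ with $\psi=1$ near $0$ and support in $[-1,1]$. Split $\hat f=\hat f\,\psi\big(\tfrac{\tau+v\cdot k}{\delta|k|}\big)+\hat f\,\big(1-\psi(\cdot)\big)$, so that $\hat\rho_\varphi=\hat\rho_1+\hat\rho_2$.

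\emph{The near-resonant part $\hat\rho_1$.} By Cauchy--Schwarz in $v$,
\[
|\hat\rho_1|^2\le \|\varphi\|_\infty^2\,\big|\{v\in B_R:\ |s+v\cdot\omega|\le\delta\}\big|\int|\hat f|^2dv\lesssim \delta\int|\hat f|^2dv ,
\]
since the slab in $B_R$ has measure $O(R^{d-1}\delta)$, uniformly in $(\tau,k)$.

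\emph{The non-resonant part $\hat\rho_2$.} On the support of $1-\psi$ we may divide by $\tau+v\cdot k$, which gives
\[
\hat\rho_2=\sum\int \frac{k_j}{\tau+v\cdot k}\big(1-\psi\big)\varphi\,\partial_v^\alpha\hat h_j\,dv .
\]
Integrate by parts $|\alpha|\le m$ times in $v$; this is where $\varphi\in C^m_c$ is needed. Each $v$-derivative that falls on $(\tau+v\cdot k)^{-1}$ or on $\psi\big(\tfrac{\tau+v\cdot k}{\delta|k|}\big)$ costs a factor $|k|/|\tau+v\cdot k|$, respectively $1/\delta$. The prefactor $k_j/(\tau+v\cdot k)$ contributes one more such factor. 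Hence the kernel is bounded by $C\sum_{\ell\le m}\big(|k|/|\tau+v\cdot k|\big)^{\ell+1}\delta^{-(m-\ell)}$ on $\{|s+v\cdot\omega|\ge\delta\}\cap B_R$. Its squared $L^2_v$ norm is controlled by $\int_{|u|\ge\delta}|u|^{-2(m+1)}du\lesssim\delta^{-2m-1}$, where we integrate in the coordinate $u=s+v\cdot\omega$. Thus $|\hat\rho_2|^2\lesssim\delta^{-2m-1}\int|\hat h|^2dv$.

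\emph{Optimisation in $\delta$.} Integrating in $(\tau,k)$ and using Plancherel gives
\[
\|\rho_\varphi\|_{L^2}^2\lesssim \delta\|f\|_{L^2}^2+\delta^{-2m-1}\|h\|_{L^2}^2 .
\]
The choice $\delta^{2m+2}=\|h\|^2/\|f\|^2$ gives $\|\rho_\varphi\|_{L^2}\lesssim\|f\|^{1-\frac1{2(m+1)}}\|h\|^{\frac1{2(m+1)}}$, which is the claim for $p=2$ since $p'=2$. The key point is that the full $x$-derivative $k_j$ is exactly compensated by the $|k|$ in the resolvent. This is why there is no gain of regularity, only an interpolation bound, and why $\delta$ is scale-invariant in $k$.

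\emph{Main obstacle: general $p$.} For $p\ne2$ the pointwise Cauchy--Schwarz argument in Fourier space is no longer available. We would try to realise both pieces as operators $\rho_\varphi\big(\psi(\tfrac{T}{\delta|D_x|})f\big)$, with $T=\partial_t+v\cdot\nabla_x$, and show that the $v$-averaged symbols are Mikhlin-type multipliers in $(\tau,k)$, homogeneous of degree $0$. We then need $L^p$ bounds with constants $\delta^{1/p'}$ and $\delta^{-(m+1)+1/p'}$, obtained by interpolating the $L^2$ bound with trivial $L^1/L^\infty$ bounds (following \cite{perthame_limiting_1998, bouchut_kinetic_2000}). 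The step we expect to be hardest is tracking the precise $\delta$-dependence of these multiplier bounds. For the application in this paper only $p=2$ is needed.
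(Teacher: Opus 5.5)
\paragraph{The case $p=2$.} Your $L^2$ argument is correct. It is essentially the Perthame--Souganidis argument the paper relies on. The paper does not prove Theorem~\ref{thm:ALPerthameSouganidis}; it cites it. In the proof of Corollary~\ref{corollary AL} it reuses the splitting with the rational weights $\frac{\beta^2|k|^2}{(\tau+v\cdot k)^2+\beta^2|k|^2}$ and $\frac{k_j(\tau+v\cdot k)}{(\tau+v\cdot k)^2+\beta^2|k|^2}$, and quotes \eqref{eq:estimate perthame souganidis}.

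Your compactly supported cutoff $\psi\big(\tfrac{\tau+v\cdot k}{\delta|k|}\big)$ gives the same bound $\delta^{1/2}\|f\|_{L^2}+\delta^{-m-\frac12}\|h\|_{L^2}$. It turns the resonant part into a one-line slab estimate. The price is the transition-layer terms from derivatives of $\psi$, which the rational weights avoid.

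Two small repairs are needed. First, your kernel bound with the factors $\delta^{-(m-\ell)}$ requires $\delta\le 1$, so the regime $\|h\|_{L^2}\ge\|f\|_{L^2}$ should be handled by the trivial bound $\|\rho_\varphi(f)\|_{L^2}\le C\|f\|_{L^2}$. Second, $|u|^{-(\ell+1)}\delta^{-(m-\ell)}$ is not dominated by $|u|^{-(m+1)}$ on $\{|u|\ge\delta\}$. However, each such term, squared and integrated over $\{|u|\ge\delta,\ |u-s|\le R\}$, is $O(\delta^{-2m-1})$, so your conclusion stands.

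\paragraph{The gap: $p\neq 2$.} The statement covers all $1<p<\infty$, and you only sketch $p\neq 2$; the sketch cannot be completed as written.

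\emph{No endpoint bounds.} There are no trivial $L^1$ or $L^\infty$ bounds for your pieces. For fixed $v$, the multiplier $\psi\big(\tfrac{\tau+v\cdot k}{\delta|k|}\big)$ is homogeneous of degree $0$ and non-constant. It is therefore discontinuous at the origin and not bounded on $L^1$ or $L^\infty$. Moreover, fixed-$v$ multiplier bounds cannot see the gain, which comes only from the $v$-average.

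\emph{For $p>2$ the target is false.} A near-resonant bound $\delta^{1/p'}$ would force a gain of order $\delta$ at $p=\infty$, where averaging gains nothing. Concretely, take $d=1$, $m=0$, and $v_n=3n\delta$ for $1\le n\le N\simeq\delta^{-1}$. Let $\chi$ be a bump supported in $[-1,1]$ with $\int\chi=1$, let $\varphi=1$ on $[-1,2]$, and let $E$ be a Schwartz function with $\widehat E$ supported in a ball of radius $\epsilon\ll\delta$. Define
\[
f=\sum_{n=1}^N E(t,x)\,e^{i(x-v_nt)}\,\chi\big(\tfrac{v-v_n}{\delta}\big),\qquad \widehat h=\tfrac{\tau+vk}{k}\,\widehat f ,
\]
so that $(\partial_t+v\partial_x)f=\partial_x h$ exactly (take real parts for real data). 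Then:
\begin{enumerate}[label=(\alph*)]
\item The $v$-supports are disjoint, and $\frac{\tau+vk}{k}=v-v_n+O(\epsilon)$ on the $n$-th packet.
\item Hence $\|f\|_{L^p}\lesssim\|E\|_{L^p}$ and $\|h\|_{L^p}\lesssim\delta\|E\|_{L^p}$.
\item On the other hand, $\rho_\varphi(f)=\delta\,E\,e^{ix}\sum_{n=1}^N e^{-3in\delta t}$.
\item The $L^p$ norm of the Dirichlet kernel then gives $\|\rho_\varphi(f)\|_{L^p}\simeq\delta N^{1-\frac1p}\|E\|_{L^p}\simeq\delta^{1/p}\|E\|_{L^p}$.
\end{enumerate}
So an inequality $\|\rho_\varphi(f)\|_{L^p}\le C\|f\|_{L^p}^{1-\theta}\|h\|_{L^p}^{\theta}$ forces $\theta\le\frac1p$, which is smaller than $\frac1{p'}$ when $p>2$.

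For general $m$, use a profile $\chi$ whose moments of order $1,\dots,m$ vanish, and take $\epsilon\le\delta^{m+1}$. Then $(v-v_n)\chi\big(\tfrac{v-v_n}{\delta}\big)=\delta^{m+1}\partial_v^m\big[\zeta\big(\tfrac{v-v_n}{\delta}\big)\big]$ for some $\zeta\in C^\infty_c$, and the same construction gives $\theta\le\frac{1}{(m+1)p}$.

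\paragraph{Consequence.} The inequality with exponent $\frac{1}{(m+1)p'}$ is tenable only for $1<p\le 2$. Even that range needs the genuinely $L^p$ arguments of \cite{perthame_limiting_1998,bouchut_kinetic_2000}, not your $L^2$ splitting plus endpoint interpolation. None of this affects the paper, which only uses $p=2$.
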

We deduce the following variant of Theorem \ref{thm:ALPerthameSouganidis}, which describes the velocity averaging for a right hand side of the form $\nabla_x \cdot h +g$, where $h=(h_1,...,h_d)$ is ``small" and where $g$, while not necessarily small, contains no derivative in $x$. More precisely, we consider an equation of the form
\begin{equation}
    (\partial_t +v\cdot \nabla_x)f = \sum_{j=1}^d\sum_{|\alpha|\leq m}\partial_{x_j}\partial_{v}^{\alpha} h_j + \sum_{|\alpha| \leq m} \partial_v^{\alpha}g.
    \label{eq:transport with T1}
\end{equation}
\begin{corollary}
\label{corollary AL}
    Let $m\geq 0$, $\varphi \in C^m_c(\mathbb{R}^d)$ and $g\in L^2_{\mathrm{loc}}(\mathbb{R}_t,L^2(\mathbb{R}^d_x\times \mathbb{R}^d_v))$. Moreover, let $f \in L^2_{\mathrm{loc}}(\mathbb{R}_t,L^2(\mathbb{R}^d_x\times \mathbb{R}^d_v))$ be a solution of the transport equation \eqref{eq:transport with T1}. 
    There exists $C\equiv C[R,\varphi]$ such that the finite differences of the averages in velocity
    \begin{equation*}
        \Delta_z \rho_{\varphi}(f) (x)= \rho_{\varphi}(f)(x+z)-\rho_{\varphi}(f)(x),
    \end{equation*}
    satisfy the  bound
    \begin{equation}
    \label{eq:bound modulus of continuity}
            \|\Delta_z \rho_{\varphi}(f)\|_{L^2} \leq C \left(\|h\|_{L^2}+|z|\|g\|_{L^2}\right)^{\frac{1}{2(m+1)}} \|f\|_{L^2}^{1-\frac{1}{2(m+1)}}.
        \end{equation}
        In particular, let $f^n$ be a family of solutions to \eqref{eq:transport with T1}, with $h_j^n := T^{\varepsilon_n}_j f^n$, where $T_j^{\varepsilon_n}\colon L^2\rightarrow L^2$, $j=1,...,d$ is a family of bounded operators such that
    \begin{equation}
        \|h_j^n\|_{L^2} = \|T^{\varepsilon_n}_jf^n\|_{L^2} \leq \varepsilon_n\|f^n\|_{L^2}, \quad j=1,...,d.
        \label{eq:bound T}
    \end{equation}
    If $f^n$ is a bounded family in $L^2_{\mathrm{loc}}(\mathbb{R}_t,L^2(\mathbb{R}^d_x\times \mathbb{R}^d_v))$ and $\varepsilon_n$ converges to zero as $n\rightarrow \infty$, then the family $\rho^n_{\varphi} := \rho_{\varphi}(f^n)$ is relatively compact in $L^2([-T,T],L^2_x(\mathbb{R}^d))$ for all $T>0$.
    \begin{proof}
               Following the proof of \cite{perthame_limiting_1998}, we write \eqref{eq:transport with T1} in Fourier variables
        \begin{align*}
            \widehat{f}\left((\tau+v\cdot k)^2 + \beta^2 |k|^2\right) = \beta^2|k|^2\widehat{f} + \sum_{j=1}^d k_j(\tau+v\cdot k)\partial_v^{\alpha}\widehat{h_j} -i (\tau + v\cdot k)\partial_v^{\alpha}\widehat{g}
        \end{align*}
        Therefore we can write $f$ as
        \begin{equation*}
            f = f_0 + \sum_{j=1}^d f_j +f_{\ast},
        \end{equation*}
        with
        \begin{align*}
            \widehat{f_0} = \frac{\beta^2 |k|^2}{(\tau+v\cdot k)^2 + \beta^2 |k|^2}\widehat{f}, && \widehat{f_j} = \frac{k_j(\tau+v\cdot k)}{(\tau+v\cdot k)^2 + \beta^2 |k|^2}\partial_v^{\alpha}\widehat{h_j}, 
        \end{align*}
        and
        \begin{equation*}
            \widehat{f_{\ast}} = \frac{-i(\tau +v\cdot k)}{(\tau+v\cdot k)^2 + \beta^2 |k|^2}\partial_v^{\alpha}\widehat{g}.
        \end{equation*}
        Moreover, let 
        \begin{align*}
            \rho_0(t,x) = \int_{\mathbb{R}^d} \varphi(v) f_0(t,x,v) d v, && \rho_j(t,x) = \int_{\mathbb{R}^d} \varphi(v) f_j(t,x,v) d v, 
            \end{align*}
            and 
            \begin{align*}\rho_{\ast}(t,x) = \int_{\mathbb{R}^d} \varphi(v) f_{\ast}(t,x,v) d v.
        \end{align*}
        Then the average $\rho_{\varphi}(f)$ can be written as
        \begin{equation}
            \rho_{\varphi}(f) = \rho_0 + \sum_{j=1}^d \rho_j + \rho_{\ast}.
        \end{equation}
        The averages $\rho_0$ and $\rho_j$ are treated in \cite[Lemma 3, 4]{perthame_limiting_1998}, together with the following estimate
        \begin{equation}
            \label{eq:estimate perthame souganidis}\|\rho_0+\sum_{j=1}^d\rho_j\|_{L^2(\mathbb{R}_t\times \mathbb{R}^d_x)} \leq C(\|\varphi\|_{W^{m,\infty}},R) \left(\beta^{\frac{1}{2}}\|f\|_{L^2} + {\beta^{-m-\frac{1}{2}}}\|h\|_{L^2}\right),
        \end{equation}
        where $R>0$ is such that $\mathrm{supp}(\varphi) \subset B_R(0)$.
        Therefore it remains to obtain an estimate for $\rho_{\ast}$. We have
        \begin{align*}
            \widehat{\rho_{\ast}}(\tau,k) = \int_{\mathbb{R}^d} \varphi(v) \widehat{f_{\ast}}(\tau,k,v) dv = \int_{\mathbb{R}^d} \varphi(v) \frac{-i(\tau +v\cdot k)}{(\tau+v\cdot k)^2 + \beta^2 |k|^2}\partial_v^{\alpha}\widehat{g} d v.
        \end{align*}
        Suppose that $m=0$ so that $|\alpha|=0$. Together with the change of variables $\gamma = \tau/\beta|k|$, $v_1 = v\cdot k/|k|$ we have
        \begin{align*}
            |\widehat{\rho_{\ast}}(\tau,k)|^2 &\leq \|\varphi\|_{L^\infty}^2 \int_{\mathbb{R}^d}|\widehat{g}|^2 d v  \int_{\mathbb{R}^d} \frac{|\tau +v\cdot k|^2}{|(\tau+v\cdot k)^2 + \beta^2 |k|^2|^2} d v \\
            &= \frac{1}{|k|^2}\frac{\|\varphi\|_{L^\infty}^2}{\beta^2} \int_{\mathbb{R}^d}|\widehat{g}|^2 d v\int_{\mathbb{R}^d} \frac{\left(\gamma + \frac{v_1}{\beta}\right)^2}{\left((\gamma + \frac{v_1}{\beta})^2 + 1\right)^2} d v_1 .
        \end{align*}
        The second integral is proportional to $\beta$. This yields
        \begin{equation*}
            |\widehat{\rho_{\ast}}(\tau,k)|^2 \leq  \frac{\|\varphi\|_{L^\infty}^2}{\beta|k|^2} \int_{\mathbb{R}^d}|\widehat{g}|^2 d v.
        \end{equation*}
        This bound is interesting for $|k|>1$. For low frequencies, i.e. $|k|\le 1$, one has the straightforward inequality
        \begin{equation*}
            \|\rho_{\ast}\|_{L^2_{t,x}} = \left\| \int_{\mathbb{R}^d}\varphi(v)f_{\ast}(t,x,v) dv\right\|_{L^2_{t,x}} \leq C'(R)\|\varphi\|_{L^\infty} \|f_{\ast}\|_{L^2_{t,x,v}}.
        \end{equation*}
For $m>0$ we denote
         \begin{equation}
             Q_{\beta}(\tau,k,v) := \frac{\tau +v\cdot k}{(\tau+v\cdot k)^2 + \beta^2 |k|^2}.
         \end{equation}
        After integrating by parts and applying the Cauchy-Schwarz we obtain, with $|\alpha| \leq m$,
        \begin{align*}
           |\widehat{\rho_{\ast}}(\tau,k)|^2 &= \left|\int_{\mathbb{R}^d} \varphi Q_{\beta}\partial_v^{\alpha}\widehat{g} d v\right|^2 \\ &\leq \|\varphi\|_{W^{m,\infty}_v}^2 \int_{\mathbb{R}^d}|\widehat{g}|d v\int_{\mathbb{R}^d}\left[|Q_{\beta}|^2+
           \frac{|k|^{2m}|\tau +v\cdot k|^{2(m+1)}}{|(\tau+v\cdot k)^2 + \beta^2 |k|^2|^{2(m+1)}}\right]d v.
        \end{align*}
        Again, changing variables to $\gamma = \tau/\beta|k|$, $v_1 = v\cdot k/|k|$ yields
        \begin{align*}
            |\widehat{\rho_{\ast}}(\tau,k)|^2 &\leq \|\varphi\|_{W^{m,\infty}_v}^2\|\widehat{g}\|_{L^2_v}^2 \int_{\mathbb{R}} \left[\frac{1}{\beta^2|k|^2}\frac{\left(\gamma + \frac{v_1}{\beta}\right)^2}{\left((\gamma + \frac{v_1}{\beta})^2 + 1\right)^2} + \frac{1}{\beta^{2(m+1)}|k|^2}\frac{\left(\gamma + \frac{v_1}{\beta}\right)^{2(m+1)}}{\left((\gamma + \frac{v_1}{\beta})^2 + 1\right)^{2(m+1)}} \right]dv_1.
        \end{align*}
        Again, the integrals with respect to $v_1$ are proportional to $\beta$ and thus
        \begin{align}
            \label{eq:first estimate rho*}|\widehat{\rho_{\ast}}(\tau,k)|^2 &\leq  \|\varphi\|_{W^{m,\infty}_v}^2\|\widehat{g}\|_{L^2_v}^2 \left[\frac{1}{\beta|k|^2}+\frac{1}{\beta^{2m+1}|k|^2}\right].
        \end{align}
       
        Now we obtain an estimate in $L^2(\mathbb{R}^d)$ on the finite differences
        \begin{equation*}
            \Delta_zf(x) = \tau_zf(x)-f(x),
        \end{equation*}
        where $\tau_z(f)$ is the translation of $f$ by $z$, i.e. $\tau_zf(x) := f(x+z)$. We have
        \begin{equation*}
            \Delta_z \rho_{\varphi}(f) := \Delta_z \rho_0 + \sum_{j=1}^d\Delta_z \rho_j + \Delta_z \rho_{\ast}.
        \end{equation*}
        For the first two terms we have the following estimates, due to \eqref{eq:estimate perthame souganidis},
        \begin{align}
            \label{eq:estimate rho0}\|\Delta_z\rho_0\|_{L^2} &\leq 2 \|\rho_0\|_{L^2} \leq C(\|\varphi\|_{L^{\infty}},R)\beta^{\frac{1}{2}}\|f\|_{L^2},\\
            \label{eq:estimate rhoj}\sum_{j=1}^d\left\|\Delta_z\rho_j\right\|_{L^2} &\leq 2 \sum_{j=1}^d\left\|\rho_j\right\|_{L^2} \leq C(\|\varphi\|_{W^{m,\infty}},R)\beta^{-m-\frac{1}{2}}\|h\|_{L^2}.
        \end{align}
        For $\Delta_z \rho_{\ast}$ we observe that
        \begin{equation*}
            \widehat{\Delta_z \rho_{\ast}}(\tau,k) = \left(e^{-ik\cdot z}-1\right)\widehat{\rho_{\ast}}(\tau,k),
        \end{equation*}
        from which we deduce
        \begin{equation*}
            |\widehat{\Delta_z \rho_{\ast}}(\tau,k)| = 2\left|\sin(\tfrac{k\cdot z}{2})\right||\rho_{\ast}(t,k)| \leq |z||k||\widehat{\rho_{\ast}}(\tau,k)|.
        \end{equation*}
        It follows that
        \begin{equation*}
            \|\Delta_z \rho_{\ast}\|_{L^2_{t,x}} \leq |z| \left\||k|\widehat{\rho_{\ast}}(\tau,k)\right\|_{L^2_{\tau,k}}.
        \end{equation*}
        From \eqref{eq:first estimate rho*} we obtain
        \begin{equation*}
            \||k|\widehat{\rho_{\ast}}(\tau,k)\|_{L^2_{\tau,k}} = \left(\int\left|\int_{\mathbb{R}^d} |k|^2 |\widehat{\rho_{\ast}}(\tau,k)|^2dk\right|dt\right)^{\frac{1}{2}}\leq C(R) \|{g}\|_{L^2_{t,x,v}} \left[\beta^{-\frac{1}{2}}+ \beta^{-m-\frac{1}{2}}\right],
        \end{equation*}
        while
        \begin{equation}
        \label{eq:estimate rho*}
            \|\Delta_z \rho_{\ast}\|_{L^2_{t,x}} \leq C(R) |z|  \left[\beta^{-\frac{1}{2}}+ \beta^{-m-\frac{1}{2}}\right]\|{g}\|_{L^2_{t,x,v}}.
        \end{equation}
        Combining \eqref{eq:estimate rho0}, \eqref{eq:estimate rhoj} and \eqref{eq:estimate rho*} yields
        \begin{equation*}
            \|\Delta \rho_{\varphi}(f)\|_{L^2} \leq C\left[ \beta^{\frac{1}{2}}\|f\|_{L^2} +\beta^{-m-\frac{1}{2}}\|h\|_{L^2} +\left(\beta^{-\frac{1}{2}}+ \beta^{-m-\frac{1}{2}}\right)|z|\|{g}\|_{L^2} \right],
        \end{equation*}
        where $C$ designates a constant depending on $R$, $\|\varphi\|_{W^{m,\infty}}$ and $d$. For $\beta <1$, the dominant term for the term involving $\|g\|_{L^2}$ is $\beta^{-m-\frac{1}{2}}$. Optimizing yields
        \begin{equation}
            \beta = \left(\frac{\|h\|_{L^2}+|z|\|g\|_{L^2}}{\|f\|_{L^2}}\right)^{\frac{1}{m+1}}.
        \end{equation}
        Thus,
        \begin{equation}
        \label{eq:bound modulus of continuity proof}
            \|\Delta_z \rho_{\varphi}(f)\|_{L^2} \leq C \left(\|h\|_{L^2}+|z|\|g\|_{L^2}\right)^{\frac{1}{2(m+1)}} \|f\|_{L^2}^{1-\frac{1}{2(m+1)}}.
        \end{equation}
        This proves \eqref{eq:bound modulus of continuity}.

        For the statement about the relative compactness of $\rho^n_{\varphi}$ we take $h^n_j = T^{\varepsilon_n}_j f^n$ and observe that from the equation,
        \begin{align*}
            \partial_t \rho^n_{\varphi} &= -\mathrm{div}_x\int_{\mathbb{R}^d}\varphi(v)(vf^n)dv +\mathrm{div}_x\int_{\mathbb{R}^d} \varphi \sum_{|\alpha|<m}\partial_v^{\alpha}T^{\varepsilon_n}f^ndv + \int_{\mathbb{R}^d}\varphi \sum_{|\alpha|<m}\partial_v^{\alpha}g^ndv \\
            &= -\mathrm{div}_x\int_{\mathbb{R}^d}(v\varphi(v))f^ndv +\mathrm{div}_x\int_{\mathbb{R}^d} \left((-1)^m\sum_{|\alpha|<m}\partial_v^{\alpha}\varphi(v)\right) T^{\varepsilon_n}f^ndv \\ &\qquad + \int_{\mathbb{R}^d}\left((-1)^m\sum_{|\alpha|<m}\partial_v^{\alpha}\varphi(v)\right)g^ndv.
        \end{align*}
        Thus, we observe easily that, for $T >0$, 
        \begin{equation}
            \partial_t \rho^n_{\varphi} \in L^2([-T,T],H^{-1}_x).
        \end{equation}
        By \eqref{eq:bound modulus of continuity proof} and the pointwise bound \eqref{eq:bound T} on $T^{\varepsilon_n}$, the family $\rho^n_{\varphi}(t,\cdot)$ is relatively compact in $L^2_x$ for $t$ fixed (e.g. by invoking the Arzelà-Ascoli theorem). Since the embedding $L^2_x \subset H^{-1}_x$ is continuous, we can invoke a variant of the Aubin-Lions lemma from \cite{simon_compact_1986} and conclude that the family $\rho^{n}_{\varphi}$ is relatively compact in $L^2([-T,T],L^2_x)$.
    \end{proof}
\end{corollary}

Now we proceed to the proof of Theorem \ref{thm:main}. For simplicity, we omit $\hbar$ superscripts of $A^{\hbar}$, $B^{\hbar}$ and $\phi^{\hbar}$.
\begin{proof}[Proof of Theorem \ref{thm:main}]

By Proposition \ref{thm:condition mixed states} $W^{\hbar}_A$ is a bounded family in $L^2_{x,v}$. We apply Corollary \ref{corollary AL} with $\varepsilon_n=\hbar_n$. We will abbreviate $\hbar=\hbar_n$ for notational simplicity. Recall from Proposition \ref{thm:magnetic equations}, \eqref{eq:magnetic Wigner}-\eqref{eq:Phi} the magnetic Wigner equation
\begin{align*}
     (\partial_t +v\cdot \nabla_x) W^{\hbar}_A 
    &= \Phi^{\hbar}[\phi,B,\partial_t A]W^{\hbar}_A,
\end{align*}
where
\begin{align}
\label{eq:Phi2}
\begin{split}
    \Phi^{\hbar}[\phi,B,\partial_t A]W^{\hbar}_A &:= -i\hbar \nabla_v \cdot \left(\mathrm{curl}_x (\Omega[B] W^{\hbar}_A) - \Omega[\mathrm{curl}_x B] W^{\hbar}_A\right) \\ &\qquad +i\hbar \varepsilon_{jk\ell}\varepsilon_{nkm} \partial_{v_{\ell}} \partial_{v_{m}} \Omega[B_j]\Theta[B_n]W_A^{\hbar} \\
    &\qquad - \nabla_v \cdot (v \times \Theta[B]W^{\hbar}_A) -\theta[\phi]W^{\hbar}_A + \nabla_v\cdot(\Theta[\partial_tA] W^{\hbar}_A).
    \end{split}
\end{align}
with the pseudo-differential operators $\Omega,\Theta$ defined as
\begin{align}
    \Omega[B]\psi &= \mathcal{F}_{y\rightarrow v}\left[\mathcal{I}_1[B] \right]\ast_v \psi, & \mathcal{I}_1[B]&= \int_{-1/2}^{1/2} f(x+s\hbar y) s d s, \label{eq:Pseudodiff1}\\
     \Theta[B]\psi &= \mathcal{F}_{y\rightarrow v}\left[\mathcal{I}_2[B]\right]\ast_v \psi, &   \mathcal{I}_2[B]&= \int_{-1/2}^{1/2} f(x+\sigma \hbar y) d \sigma.\label{eq:Pseudodiff2}
\end{align}
and the pseudo-differential operator $\theta$ defined as
\begin{equation}
    \theta[\phi]\psi := \mathcal{F}_{y\rightarrow v}\left[\tfrac{1}{i\hbar}({\phi(x+\tfrac{\hbar y}{2}) - \phi(x-\tfrac{\hbar y}{2})})\right]\ast_v \psi.\label{eq:Pseudodiff3}
\end{equation}
Analogously to the proof of Theorem 2 in \cite{golse_velocity_2025}, we can write $\theta[\phi]W^{\hbar}_A$ as a divergence in $v$:
\begin{equation*}
    \theta[\phi]W^{\hbar}_A = \nabla_v \cdot (L[\phi] \ast_v W^{\hbar}_A),
\end{equation*}
where
\begin{equation*}
    L_j[\phi](x,v) := \frac{1}{(2\pi)^d} \int_{\mathbb{R}^d} e^{-iv\cdot y} \frac{y_j}{|y|} \frac{\phi(x+\tfrac{\hbar y}{2}) - \phi(x-\tfrac{\hbar y}{2})}{\hbar|y|}dy, \quad j=1,...,d.
\end{equation*}
Therefore we deduce that the operator $\Phi^{\hbar}[\phi,B,\partial_tA]$ can be written as the following sum
\begin{equation*}
    \Phi^{\hbar}[\phi,B,\partial_t A] = \sum_{j,k=1}^d\partial_{x_k} \partial_{v_j} T^{(1)}_{jk} + \sum_{j=1}^d \partial_{v_j}\left(\sum_{k=1}^d\partial_{v_k}T^{(2)}_{jk} +T^{(3)}_j + T^{(4)}_j+ T^{(5)}_j\right),
\end{equation*}
where $T^{(k)}$, $k=1,...,5$ are the operators
\begin{align*}
\begin{cases}
    T^{(1)}_{jk} &:= -i\hbar \epsilon_{jk\ell} \Omega[B_{\ell}], \\
    T^{(2)}_{jk} &:= i\hbar \epsilon_{\ell n j}\epsilon_{mnk}  \Omega[B_\ell]\Theta[B_m],
    \\T^{(3)}_j &:= -i\hbar \epsilon_{jk\ell}\Omega[\partial_{x_k}B_{\ell}],\\
    T^{(4)}_j &:= -\epsilon_{jk\ell}v_k\Theta[B_\ell],\\
    T^{(5)}_j &:= -L_j[\phi]\ast_v \cdot + \Theta[\partial_t A_j].
    \end{cases}
\end{align*}
Note that $T^{(k)}$, $k=1,...,5$ are pseudo-differential operators (acting on $W^{\hbar}_A$ by convolution) which are defined via the pseudo-differential operators $\Omega$, $\Theta$ and $\theta$, defined in \eqref{eq:Pseudodiff1}-\eqref{eq:Pseudodiff3}.

Thus in Corollary \ref{corollary AL} we take
\begin{align}
\label{eq:h and g}
    h = T^{(1)}f, && g = \nabla_v\cdot(\nabla_v \cdot T^{(2)} + T^{(3)} + T^{(4)} + T^{(5)})f.
\end{align}
with $f=W^{\hbar}_A$.
In order to apply Corollary \ref{corollary AL} we then need to show that $T^{(1)}$ satisfies the bound
\begin{equation}
    \|T^{(1)}f\|_2 \leq C\hbar \|f\|_2,
\end{equation}
and that $g$ is bounded in $L^2_xH^{-2}_{v}$.

For 
$T^{(1)}$ we have to show that
\begin{equation*}
    \|T^{(1)}f\|_{L^2} \leq C\hbar \|f\|_{L^2} \quad \Leftrightarrow \quad \|\Omega[B]f\|_{L^2} \leq C \|f\|_{L^2}.
\end{equation*}
By definition, we have
\begin{align*}
    \left\|\Omega[B] f\right\|_{L^2_{x,v}} =\left\|\mathcal{F}_{y\rightarrow v}\left[\int_{-1/2}^{1/2}B(x+s \hbar y)sd s \right]\ast_v f\right\|_{L^2_{x,v}}.
\end{align*}
Now we use the elementary inequality
\begin{equation}
    \|f\ast g\|_{L^2} \leq \|\mathcal{F}f\|_{L^\infty}\|g\|_{L^2}.
    \label{eq:convolution inequality}
\end{equation}
This yields
\begin{align*}
    \left\|\Omega[ B] f\right\|_{L^2_{x,v}} &\leq \sup_{x,z}\left|\mathcal{F}_{v\rightarrow z}\left[\mathcal{F}_{y\rightarrow v}\left[\int_{-1/2}^{1/2}B(x+s \hbar y)sd s \right]\right] \right|\|f\|_{L^2_{x,v}}\\
    &\leq\sup_{x,z} \left|\int_{-1/2}^{1/2}B(x+s \hbar z)sd s\right|\|f\|_{L^2_{x,v}} \\
    &\leq \sup_{x,z,s} \left|B(x+s \hbar z)\right| \int_{-1/2}^{1/2}|s| d s \|f\|_{L^2_{x,v}} \\
    &\leq \frac{1}{4} \| B\|_{L^{\infty}} \|f\|_{L^2_{x,v}}.
\end{align*}

For $T^{(2)}$ we have to show that
\begin{equation*}
     \|\Omega[B]\Theta[B]f\|_{L^2} \leq C\|f\|_{L^2}.
\end{equation*}
From the previous term we know that $\|\Omega[B]f\|_{L^2} \leq  \|B\|_{L^{\infty}} \|f\|_{L^2}$. Therefore it remains to bound $\Theta[B]$. Similar to the above we observe
\begin{align*}
    \|\Theta[B]f\|_{L^2_{x,v}} &\leq  \sup_{x,z,\sigma}\left|B(x+\sigma \hbar z)\right| \int_{-1/2}^{1/2} d \sigma \|f\|_{L^2_{x,v}} \\ &\leq \|B\|_{L^{\infty}}\|f\|_{L^2_{x,v}}.
\end{align*}
Thus
\begin{equation*}
     \|\Omega[B]\Theta[B]f\|_{L^2_{x,v}} \leq \frac{1}{4}\|B\|_{L^{\infty}}^2\|f\|_{L^2_{x,v}}.
\end{equation*}

For $T^{(3)}$ we repeat the argument for $T^{(1)}$ with $B$ replaced by $\nabla \times B$ and obtain
\begin{equation*}
    \|\Omega[\nabla \times B]f\|_{L^2_{x,v}} \leq \|\nabla \times B\|_{L^{\infty}}\|f\|_{L^2_{x,v}}.
\end{equation*}
However, since $T^{(3)}$ scales with $\hbar$, it is sufficient to assume that
\begin{equation}
    \hbar (\nabla \times B) \rightarrow 0 \quad \text{in }L^{\infty},
\end{equation}
which is strictly weaker than assuming $\nabla \times B \in L^{\infty}$.

For $T^{(4)}$ we observe that the multiplication by $v$ can be absorbed into the cutoff function $\varphi \in C^2_c(\mathbb{R}^d_v)$\footnote{If $\varphi \in C^2_c$ then $\chi$ defined by $\chi(v) = v\varphi(v)$ is also in $C^2_c$. Therefore in the proof of the averaging lemma in Corollary \ref{corollary AL} one can replace $\varphi$ by $\chi$.}. Therefore the term to estimate in $T^{(4)}$ reduces to $\Theta[B]$, which was already done for $T^{(2)}$. 
Note that here we do not need to extract any power of $\hbar$ since $T^{(4)}$ is contained in the part of $\Phi^{\hbar}$ that does not contain any derivatives in $x$. 

We proceed similarly for $T^{(5)}$, for which the following estimates suffice:
\begin{align*}
    \|L[\phi]\ast_v f\|_{L^2_{x,v}} &\leq  \sup_{x,z} |\mathcal{F}_{v \rightarrow z}[L[\phi]](x,z)|\|f\|_{L^2_{x,v}} \\
    &\leq \|\phi\|_{W^{1,\infty}} \|f\|_{L^2_{x,v}},
\end{align*}
and
\begin{align*}
    \|\Theta[\partial_t A]f\|_{L^2_{x,v}} &\leq \sup_{x,z,\sigma}|\partial_tA(x+\sigma \hbar z)| \int_{-1/2}^{1/2}d \sigma \|f\|_{L^2_{x,v}}\\
    &\leq \|\partial_tA\|_{L^\infty} \|f\|_{L^2_{x,v}}.
\end{align*}
Thus we have shown that
\begin{equation}
    g \in L^2_{x}H^{-2}_v,
\end{equation} and that
\begin{equation}
    \|T^{(1)}f\|_{L^2} \leq \hbar \|f\|_{L^2}.
\end{equation}
Therefore we can invoke Corollary \ref{corollary AL} and deduce estimate \eqref{eq:modulus of continuity Wigner} by applying \eqref{eq:bound modulus of continuity} to $f= W^{\hbar}_A$ with $h,g$ given by \eqref{eq:h and g}.
We conclude that the family $\rho_{\varphi}(W^{\hbar}_A)$ is relatively compact in $L^2([-T,T],\mathbb{R}^d_x)$ since $W^{\hbar}_A \in L^{\infty}(\mathbb{R}_t,L^2(\mathbb{R}^d_x \times \mathbb{R}^d_v))$ by virtue of Proposition \ref{thm:condition mixed states} and \eqref{eq:bound Wigner L^2} on account of the mixed state condition \eqref{eq:condition mixed states}. 
\end{proof}

\appendix

\section{Proof of Proposition \ref{thm:magnetic equations}}
\label{app:derivation magnetic equations}
\begin{proof}Consider the magnetic von Neumann equation with the magnetic Hamiltonian $\tfrac{1}{2}|V|^2+ \phi$, where $\phi$ denotes the multiplication operator by $\phi(X)$. Note that we use Einstein's summation convention where summation over repeated indices is implied.
\begin{equation}
i\hbar\partial_tR(t)=[\frac12 V_jV_j+ \phi,R]\,,\quad V_j=-i\hbar\partial_j-A_j
\end{equation}
Rewrite it in terms of the operator kernel of $R$:
\begin{align}
\begin{split}
i\hbar\partial_tR(t,X,Y)&=\frac12\left((-i\hbar\partial_{X_j}-A_j(X))^2-(i\hbar\partial_{Y_j}-A_j(Y))^2\right)R(t,X,Y) \\
&\qquad +(\phi(X)-\phi(Y))R(t,X,Y).
\end{split}
\end{align}
Since $[-i\hbar\partial_{X_j}-A_j(X),i\hbar\partial_{Y_j}-A_j(Y)]=0$, this equation is recast as
\begin{align*}
i\hbar\partial_tR(t,X,Y)=&\frac12(-i\hbar\partial_{X_j}-A_j(X)+i\hbar\partial_{Y_j}-A_j(Y))
\\
&\times(-i\hbar\partial_{X_j}-A_j(X)-i\hbar\partial_{Y_j}+A_j(Y))R(t,X,Y)\\
&\qquad +(\phi(X)-\phi(Y))R(t,X,Y).
\end{align*}
In the Weyl variables $(x,y)$, cf. \eqref{eq:weyl variables}, the derivatives become
\begin{equation*}
\partial_{X_j}=\tfrac12\partial_{x_j}+\tfrac1\hbar\partial_{y_j},\quad\partial_{Y_j}=\tfrac12\partial_{x_j}-\tfrac1\hbar\partial_{y_j}.
\end{equation*}
Hence
\begin{align}
\partial_t\tilde R(t,x,y) &=(i\partial_{y_j}+\tfrac12(A_j(x+\tfrac\hbar{2}y)+A_j(x-\tfrac\hbar{2}y)))
\nonumber \\
&\quad \times(\partial_{x_j}-\tfrac{i}\hbar(A_j(x+\tfrac\hbar{2}y)-A_j(x-\tfrac\hbar{2}y)))\tilde R(t,x,y) \nonumber \\
&\qquad +\tfrac{1}{i\hbar}(\phi(x+\tfrac{\hbar y}{2})-\phi(x-\tfrac{\hbar y}{2}))\tilde{R}(t,x,y) \nonumber \\
&=: D_{1j} D_{2j} \tilde R(t,x,y) + \delta[\phi](x,y)\tilde{R}(t,x,y).
\end{align}
where
\begin{align}
    D_{1j} &:= i\partial_{y_j}+\tfrac12(A_j(x+\tfrac\hbar{2}y)+A_j(x-\tfrac\hbar{2}y)), \\
    D_{2j} &:= \partial_{x_j}-\tfrac{i}\hbar(A_j(x+\tfrac\hbar{2}y)-A_j(x-\tfrac\hbar{2}y)), \\
    \delta[\phi](x,y) &:= \tfrac{1}{i\hbar}(\phi(x+\tfrac{\hbar y}{2})-\phi(x-\tfrac{\hbar y}{2})).
\end{align}
The equation for $\tilde R$ in terms of $\tilde{S}$ (recall that $S$ is defined in \eqref{eq:weyl-stratonovich_tilde}) is recast as
\begin{align}
\begin{split}
\partial_t\tilde S(t,x,y)&=\left(e^{\frac{i}\hbar\widetilde{\Gamma_A}[x,y]}D_{1j}e^{-\frac{i}\hbar\widetilde{\Gamma_A}[x,y]}\right)
\left(e^{\frac{i}\hbar\widetilde{\Gamma_A}[x,y]}D_{2j}e^{-\frac{i}\hbar\widetilde{\Gamma_A}[x,y]}\right)\tilde S(t,x,y) \\
&\qquad + \delta[\phi](x,y)\tilde{S}(t,x,y) - \frac{i}{\hbar} \left(\partial_t \widetilde{\Gamma_A}[x,y]\right)\tilde{S}(x,y).
\end{split}
\end{align}
We will show that the operators $D_{1j}$ and $D_{2j}$ conjugated by $\exp(\frac{i}{\hbar}\widetilde{\Gamma_A}[x,y])$ coincide with $\mathcal{D}_{1}$ and $\mathcal{D}_2$ (defined in \eqref{eq:DiffOpCurved1}-\eqref{eq:DiffOpCurved2}), i.e.
\begin{align*}
\mathcal{D}_{1j} &= e^{\frac{i}\hbar\widetilde{\Gamma_A}[x,y]}D_{1j}e^{-\frac{i}\hbar\widetilde{\Gamma_A}[x,y]},
\\
\mathcal{D}_{2j} &= e^{\frac{i}\hbar\widetilde{\Gamma_A}[x,y]}D_{2j}e^{-\frac{i}\hbar\widetilde{\Gamma_A}[x,y]}.
\end{align*}
It is more convenient to rewrite them in the original variables $X,Y$:
\begin{align}
\mathcal{D}_{1j} =& e^{\frac{i}\hbar\Gamma_A[X,Y]}(i\frac\hbar{2}(\partial_{X_j}-\partial_{Y_j})+\frac12(A_j(X)+A_j(Y)))e^{-\frac{i}\hbar\Gamma_A[X,Y]}
\nonumber \\
=& i\frac\hbar{2}(\partial_{X_j}-\partial_{Y_j})+\frac12(A_j(X)\!+\!A_j(Y))+\frac12(\partial_{X_j}-\partial_{Y_j})\Gamma_A[X,Y], \label{eq: cal D1j original variables}
\end{align}
and
\begin{align}
\mathcal{D}_{2j}=& e^{\frac{i}\hbar\Gamma_A[X,Y]}(\partial_{X_j}\!+\!\partial_{Y_j}\!-\!\tfrac{i}\hbar(A_j(X)\!-\!A_j(Y)))e^{-\frac{i}\hbar\Gamma_A[X,Y]}\nonumber
\\
=& \partial_{X_j}+\partial_{Y_j}-\frac{i}\hbar(A_j(X)-A_j(Y))-\frac{i}\hbar(\partial_{X_j}+\partial_{Y_j})\Gamma_A[X,Y] .\label{eq: cal D2j original variables}
\end{align}
We have
\[
\begin{aligned}
\partial_{X_k}\Gamma_A[X,Y]=&\partial_{X_k}\int_0^1A_\ell(X+t(Y-X))(Y_\ell-X_\ell)dt
\\
=&\int_0^1\partial_kA_\ell(X+t(Y-X))(Y_\ell-X_\ell)(1-t)dt
\\
&-\int_0^1A_\ell(X+t(Y-X))\delta_{k\ell}dt.
\end{aligned}
\]
Similarly,
\[
\begin{aligned}
\partial_{Y_k}\Gamma_A[X,Y]=&\partial_{Y_k}\int_0^1A_\ell(X+t(Y-X))(Y_\ell-X_\ell)dt
\\
=&\int_0^1\partial_kA_\ell(X+t(Y-X))(Y_\ell-X_\ell)tdt
\\
&+\int_0^1A_\ell(X+t(Y-X))\delta_{k\ell}dt.
\end{aligned}
\]
Hence
\begin{align}
\begin{split}
\frac{1}{2}(\partial_{X_k}-\partial_{Y_k})\Gamma_A[X,Y] =&\frac{1}{2}\int_0^1\partial_kA_\ell(X+t(Y-X))(Y_\ell-X_\ell)(1-2t)dt
\\
&\qquad -\int_0^1A_{k}(X+t(Y-X))dt,
\end{split}
\label{eq:Derivative(dX-dY)GammaA}
\end{align}
while
\begin{align}
(\partial_{X_k}+\partial_{Y_k})\Gamma_A[X,Y]=&\int_0^1\partial_kA_\ell(X+t(Y-X))(Y_\ell-X_\ell)dt=\Gamma_{\partial_kA}(X,Y).
\label{eq:Derivative(dX+dY)GammaA}
\end{align}\\
\underline{The operator $\mathcal{D}_{1j}$}
In \eqref{eq:Derivative(dX-dY)GammaA}, we change variables to $s=\frac{1}{2}-t$ and obtain:
\begin{align*}
\begin{split}
\frac{1}{2}(\partial_{X_k}-\partial_{Y_k})\Gamma_A[X,Y] =&\int_{-1/2}^{1/2}\partial_kA_\ell(\tfrac{1}{2}(X+Y)+s(X-Y))(Y_\ell-X_\ell)sds
\\
&\qquad -\int_{-1/2}^{1/2}A_{k}(\tfrac{1}{2}(X+Y)+s(X-Y))ds.
\end{split}
\end{align*}
In the Weyl variables $x,y$ (recall \eqref{eq:weyl variables}) we obtain
\begin{align}
\frac{1}{2}(\partial_{X_k}-\partial_{Y_k})\Gamma_A[X,Y] 
=&-\hbar y_\ell \int_{-1/2}^{1/2} \partial_kA_\ell (x+\hbar sy) s d s - \int_{-1/2}^{1/2} A_k(x+\hbar sy) d s. 
\label{eq:Derivative(dX-dY)GammaAWeyl}
\end{align}
The last term on the right hand side of \eqref{eq:Derivative(dX-dY)GammaAWeyl} can be rewritten as
\begin{equation}
\int_{-1/2}^{1/2} A_k(x+\hbar sy) d s = \int_{0}^{1/2} A_k(x+\hbar sy) d s + \int_{-1/2}^{0} A_k(x+\hbar sy) d s.
\label{eq:PropositionAux1}
\end{equation}
On the other hand we write
\begin{align*}
\frac{1}{2}(A_k(X)+A_k(Y)) = \int_0^{1/2} A_k(X)d s + \int_{-1/2}^0A_k(Y) d s.
\end{align*}
In Weyl variables this is written as
\begin{align}
\frac{1}{2}(A_k(x+\tfrac{\hbar}{2}y)+A_k(x-\tfrac{\hbar}{2}y)) = \int_0^{1/2} A_k(x+\tfrac{\hbar}{2}y) d s+\int_{-1/2}^0 A_k(x-\tfrac{\hbar}{2}y) d s.
\label{eq:PropositionAux2}
\end{align}
Now combine the first terms on the right hand sides of \eqref{eq:PropositionAux1} and \eqref{eq:PropositionAux2}:
\begin{align*}
\int_0^{1/2} \left(A_k(x+\tfrac{\hbar}{2}y) -A_k(x+s\hbar y) \right)d s &= \int_0^{1/2}\left(\int_s^{1/2} \frac{d}{d\tau} A_k(x+\tau \hbar y) d\tau \right)d s \\
&= \int_0^{1/2}\left(\int_0^{\tau} \frac{d}{d\tau} A_k(x+\tau \hbar y) d s \right)d \tau \\
&= \int_0^{1/2} \frac{d}{d\tau} A_k(x+\tau \hbar y) \tau d \tau \\
&= \hbar y_{\ell} \int_0^{1/2} \partial_{\ell} A_k(x+\tau \hbar y) \tau d \tau,
\end{align*}
$$\int_0^{1/2}\tfrac{d}{d\tau}A_k(x+\tau \hbar y)\left(\int_0^\tau ds\right)d\tau$$
where we used Fubini's theorem in the second line. Similarly we combine the second terms on the right hand sides of \eqref{eq:PropositionAux1} and \eqref{eq:PropositionAux2} and obtain 
\begin{align*}
\int_{-1/2}^{0}\left( A_k(x-\tfrac{\hbar}{2}y) -A_k(x+s\hbar y) \right)d s = \hbar y_{\ell} \int_{-1/2}^{0} \partial_{\ell} A_k(x+\tau \hbar y) \tau d \tau.
\end{align*}
Therefore, together with $\partial_kA_{\ell} -\partial_{\ell} A_k = \varepsilon_{jk\ell} B_j$,
\begin{align*}
    &\frac{1}{2}(A_k(X)+A_k(Y))+ \frac{1}{2}(\partial_{X_k}-\partial_{Y_k})\Gamma_A[X,Y] \\ &\qquad\qquad=  -\hbar y_\ell \int_{-1/2}^{1/2} \left(\partial_kA_\ell (x+\hbar sy) - \partial_{\ell} A_k(x+s \hbar y)\right) s d s,
    \\ &\qquad\qquad=  -\hbar y_\ell \int_{-1/2}^{1/2}  \varepsilon_{jk\ell} B_j(x+s\hbar y) s d s.
\end{align*}
Now \eqref{eq: cal D1j original variables} becomes in Weyl variables
\begin{equation}
\mathcal{D}_{1j} = i\partial_{y_j} - \hbar y_\ell \int_{-1/2}^{1/2}  \varepsilon_{jk\ell} B_j(x+s\hbar y) s d s.
\end{equation}
\underline{The operator $\mathcal{D}_{2j}$:}
Expressing \eqref{eq:Derivative(dX+dY)GammaA} in Weyl variables and switching to $s=\frac{1}{2}-t$,
\begin{align*}
(\partial_{X_k}+\partial_{Y_k})\Gamma_A[X,Y] = \widetilde{\Gamma_{\partial_k A}}[x,y]=&\int_0^1\partial_kA_\ell(x+\tfrac{\hbar}{2}y-\hbar t y)(-\hbar y_\ell)dt \\
=& -\hbar y_{\ell}\int_{-1/2}^{1/2} \partial_kA_\ell(x+\hbar s y)ds.
\end{align*}
Moreover,
\begin{align*}
   A_k(X)-A_k(Y) = -\int_0^1 \partial_{\ell}A_k(X+t(Y-X))(Y_{\ell}-X_{\ell}) d t.
\end{align*}
In Weyl variables,
\begin{align*}
   A_k(x+\tfrac{\hbar}{2}y)-A_k(x-\tfrac{\hbar}{2}y) = \hbar y_\ell \int_{-1/2}^{1/2} \partial_{\ell}A_k(x+\hbar s y) d s.
\end{align*}
Therefore, \eqref{eq: cal D2j original variables} becomes
\begin{equation}
\mathcal{D}_{2k} = \partial_{x_k} -i y_\ell \int_{-1/2}^{1/2} \left(\partial_{\ell}A_k(x+\hbar s y) - \partial_{k}A_{\ell}(x+\hbar s y) \right)d s,
\end{equation}
and again with $\partial_kA_{\ell} -\partial_{\ell} A_k = \varepsilon_{jk\ell} B_j$ we obtain
\begin{equation}
\mathcal{D}_{2k} = \partial_{x_k} + i y_\ell \int_{-1/2}^{1/2} \varepsilon_{jk\ell} B_j(x+\hbar s y) d s.
\end{equation}
This concludes the calculations of $\mathcal{D}_{1}$ and  $\mathcal{D}_{2}$ in Weyl variables.\\
\underline{The term $\partial_tA$:} Finally, we observe for the term involving the time derivative of $A$:
\begin{align*}
    \tfrac{i}{\hbar}\partial_t \widetilde{\Gamma_A}[x,y] &= \tfrac{i}{\hbar}\partial_t \int_0^1 \partial_tA_k(x+\tfrac{\hbar}{2}y-\hbar t y)(-\hbar y_k) d t \\
    &= -i y \cdot \int_{-1/2}^{1/2}\partial_tA(x+\sigma\hbar  y) d \sigma \\
    &= -i y \cdot \mathcal{I}_2[\partial_t A].
\end{align*}
This shows that $\tilde{S}$ obeys \eqref{eq:magnetic von Neumann}.

\underline{The magnetic Wigner equation:} For the magnetic Wigner equation we observe that $\mathcal{D}_{1k} \mathcal{D}_{2k} \tilde{S}(x,y)$ can be written as
\begin{align*}
 \mathcal{D}_{1k} \mathcal{D}_{2k} \tilde{S}(x,y) 
    &=\left(i\partial_{y_k} - \hbar (y \times \mathcal{I}_1[B])_k\right)\left(\partial_{x_k} + i (y \times  \mathcal{I}_2[B])_k\right) \tilde{S}(x,y) \\
    &= i\partial_{y_k}\partial_{x_k}\tilde{S}(x,y) - i\partial_{y_k} \left((iy \times  \mathcal{I}_2[B])_k\tilde{S}(x,y)\right) \\
    &\qquad -\hbar (y \times \mathcal{I}_1[B])_k\partial_{x_k}\tilde{S}(x,y) -i\hbar(y \times \mathcal{I}_1[B])_k(y \times  \mathcal{I}_2[B])_k \tilde{S}(x,y).
\end{align*}
Recall that $W^{\hbar}_A(x,v) = (2\pi)^{-d}\mathcal{F}_{y\rightarrow v}[\tilde{S}(x,\cdot)](v)$. Then, by taking the Fourier transform of \eqref{eq:magnetic von Neumann}, we obtain
\begin{align*}
    (2\pi)^{-d}\mathcal{F}_{y\rightarrow v}[i\partial_{y_k}\partial_{x_k}\tilde{S}(x,y)] &= -v_k \partial_{x_k}W^{\hbar}_A(x,v),
    \end{align*}
    and (recall the definition of $\Theta$ in \eqref{eq:def theta}) 
    \begin{align}
    (2\pi)^{-d}\mathcal{F}_{y\rightarrow v}\left[ - i\partial_{y_k} \left((iy \times  \mathcal{I}_2[B])_k\tilde{S}(x,y)\right)\right] &=v_k \varepsilon_{kj\ell} \partial_{v_j}\mathcal{F}_{y\rightarrow v}\left[\mathcal{I}_2[B_{\ell}]\tilde{S}(x,y)\right] \nonumber \\
    &=\partial_{v_j} \varepsilon_{j\ell k}v_k\mathcal{F}_{y\rightarrow v}\left[\mathcal{I}_2[B_{\ell}]\right] \ast_{v}W^{\hbar}_A(x,v) \nonumber \\
    &=-\partial_{v_j} \left(v\times \Theta[B] W^{\hbar}_A(x,v)\right)_j, \label{eq:WignerEquationLorentzForceB}
    \end{align}
    while (recall the definition of $\Omega$ in \eqref{eq:def theta})
    \begin{align}
     (2\pi)^{-d}\mathcal{F}_{y\rightarrow v}\left[-\hbar (y \times \mathcal{I}_1[B])_k\partial_{x_k}\tilde{S}(x,y)\right] &=-i\hbar \varepsilon_{kj\ell}\partial_{v_j}\mathcal{F}_{y\rightarrow v}[\mathcal{I}_1[B_{\ell}]]\ast_v \partial_{x_k}W^{\hbar}_A(x,v) \nonumber\\
     &=i\hbar \partial_{v_j}\varepsilon_{jk \ell}\mathcal{F}_{y\rightarrow v}[\mathcal{I}_1[B_{\ell}]]\ast_v \partial_{x_k}W^{\hbar}_A(x,v) \nonumber\\
     &=i\hbar \partial_{v_j}\varepsilon_{jk \ell}\partial_{x_k}\left(\mathcal{F}_{y\rightarrow v}[\mathcal{I}_1[B_{\ell}]]\ast_v W^{\hbar}_A(x,v)\right) \nonumber\\ &\qquad -i\hbar \partial_{v_j}\mathcal{F}_{y\rightarrow v}[\varepsilon_{jk \ell}\partial_{x_k}\mathcal{I}_1[B_{\ell}]]\ast_v W^{\hbar}_A(x,v)
     \nonumber\\
     &=i\hbar \partial_{v_j}\left(\mathrm{curl}_x\left(\Omega[B] W^{\hbar}_A(x,v)\right)\right)_j \nonumber \\ &\qquad -i\hbar \partial_{v_j}\Omega[(\mathrm{curl}_x B)_{j}]W^{\hbar}_A(x,v).
     \label{eq:WignerEquationOhbarTerm1}
     \end{align}
     Moreover,
     \begin{align}
       (2\pi)^{-d}\mathcal{F}_{y\rightarrow v}\left[-i\hbar(y \times \mathcal{I}_1[B])_k(y \times  \mathcal{I}_2[B])_k \tilde{S}(x,y)\right]  &=i\hbar \epsilon_{jk\ell}\epsilon_{nkm} \partial_{v_{\ell}} \partial_{v_{m}} \Omega[B_j]\Theta[B_n]W_A^{\hbar}(x,v),
       \label{eq:WignerEquationOhbarTerm2}
    \end{align}
       while for the electric potential $\phi$ we have (recall the definition of $\theta$ in \eqref{eq:def theta})
    \begin{align}
       (2\pi)^{-d}\mathcal{F}_{y\rightarrow v}[\delta[\phi](x,y)\tilde{S}(x,y)] = -\theta[\phi]W^{\hbar}_A(x,v),
       \label{eq:WignerEquationElectricPhi}
    \end{align}
       and the time derivative of the magnetic potential $\partial_t A$,
    \begin{align}
       (2\pi)^{-d}\mathcal{F}_{y\rightarrow v}[-iy \cdot \mathcal{I}_2[\partial_tA](x,y)\tilde{S}(x,y)] &= \nabla_v\cdot (\Theta[\partial_tA]W^{\hbar}_A(x,v)).
       \label{eq:WignerEquationElectricPartialtA}
\end{align}
Combining \eqref{eq:WignerEquationLorentzForceB}-\eqref{eq:WignerEquationElectricPartialtA} yields $\Phi[\phi,B,\partial_tA]W^{\hbar}_A$.
This completes the proof.
\end{proof}

\bibliographystyle{abbrv}
\bibliography{references}

@article{moller_pauli-poisson_2025,
	title = {The {Pauli}-{Poisson} equation and its semiclassical limit},
	volume = {50},
	issn = {0360-5302,1532-4133},
	url = {https://doi.org/10.1080/03605302.2024.2439358},
	doi = {10.1080/03605302.2024.2439358},
	number = {1-2},
	journal = {Commun. Partial Differ. Equ.},
	author = {Möller, Jakob},
	year = {2025},
	mrnumber = {4858221},
	pages = {130--161},
}

@article{golse_velocity_2025,
	title = {Velocity {Averaging} for the {Wigner} {Kinetic} {Equation} in the {Semiclassical} {Regime}},
	journal = {to appear in Commun. Math. Sci.},
	author = {Golse, François and Möller, Jakob},
	year = {2025},
	note = {arXiv:2512.01529},
}

@article{golsemöllermauser,
	title = {Velocity {Averaging} {Lemmas}: {Classical}, {Quantum} and {Semi}-{Classical}},
	shorttitle = {Velocity {Averaging} {Lemmas}},
	url = {http://arxiv.org/abs/2512.01529},
	doi = {10.48550/arXiv.2512.01529},
	urldate = {2026-03-08},
	journal = {to appear in C.R. Math.},
	author = {Golse, François and Mauser, Norbert J. and Möller, Jakob},
	year = {2025},
	note = {arXiv:2512.01529},
}

@article{simon_compact_1986,
	title = {Compact sets in the space \${L}{\textasciicircum}p(0, {T}; {B})\$},
	volume = {146},
	journal = {Ann. Mate. Pura Appl.},
	author = {Simon, Jacques},
	year = {1986},
	pages = {65--96},
}

@article{filbet_approximation_2025,
	title = {On the approximation of the von-{Neumann} equation in the semi-classical limit. {Part} {I}: {Numerical} algorithm},
	volume = {527},
	issn = {00219991},
	shorttitle = {On the approximation of the von-{Neumann} equation in the semi-classical limit. {Part} {I}},
	url = {https://linkinghub.elsevier.com/retrieve/pii/S0021999125000932},
	doi = {10.1016/j.jcp.2025.113810},
	language = {en},
	urldate = {2026-08-03},
	journal = {Journal of Computational Physics},
	author = {Filbet, Francis and Golse, François},
	month = apr,
	year = {2025},
	pages = {113810},
}

@article{stratonovich_gauge_1956,
	title = {A gauge invariant analog of the {Wigner} distribution},
	volume = {1},
	journal = {Sov. Phys. D},
	author = {Stratonovich, R. L.},
	year = {1956},
	pages = {414--418},
}

@article{nedjalkov_wigner_2019,
	title = {Wigner equation for general electromagnetic fields: {The} {Weyl}-{Stratonovich} transform},
	volume = {99},
	issn = {2469-9950, 2469-9969},
	shorttitle = {Wigner equation for general electromagnetic fields},
	url = {https://link.aps.org/doi/10.1103/PhysRevB.99.014423},
	doi = {10.1103/PhysRevB.99.014423},
	language = {en},
	number = {1},
	urldate = {2026-04-06},
	journal = {Physical Review B},
	author = {Nedjalkov, M. and Weinbub, J. and Ballicchia, M. and Selberherr, S. and Dimov, I. and Ferry, D. K.},
	month = jan,
	year = {2019},
	pages = {014423},
}

@phdthesis{lein_2011_thesis,
	address = {München},
	title = {Semiclassical {Dynamics} and {Magnetic} {Weyl} {Calculus}},
	doi = {arXiv:1202.4668v2},
	school = {Technische Universität},
	author = {Lein, Maximilian},
	year = {2011},
}

@incollection{ichinose_magnetic_2013,
	address = {Basel},
	title = {Magnetic {Relativistic} {Schrödinger} {Operators} and {Imaginary}-time {Path} {Integrals}},
	volume = {232},
	isbn = {978-3-0348-0590-2 978-3-0348-0591-9},
	url = {https://link.springer.com/10.1007/978-3-0348-0591-9_5},
	doi = {10.1007/978-3-0348-0591-9_5},
	language = {en},
	urldate = {2026-04-03},
	booktitle = {Mathematical {Physics}, {Spectral} {Theory} and {Stochastic} {Analysis}},
	publisher = {Springer Basel},
	author = {Ichinose, Takashi},
	editor = {Demuth, Michael and Kirsch, Werner},
	year = {2013},
	note = {Series Title: Operator Theory: Advances and Applications},
	pages = {247--297},
}

@book{bouchut_kinetic_2000,
	series = {Series in {Applied} {Mathematics}},
	title = {Kinetic equations and asymptotic theory},
	url = {https://hal.science/hal-00538692},
	urldate = {2026-04-03},
	publisher = {Elsevier},
	author = {Bouchut, François and Golse, François and Pulvirenti, Mario},
	editor = {Perthame, Benoît and Desvillettes, Laurent},
	year = {2000},
}

@article{perthame_limiting_1998,
	title = {A limiting case for velocity averaging},
	volume = {31},
	copyright = {http://www.elsevier.com/tdm/userlicense/1.0/},
	issn = {00129593},
	url = {http://linkinghub.elsevier.com/retrieve/pii/S0012959398801080},
	doi = {10.1016/S0012-9593(98)80108-0},
	language = {en},
	number = {4},
	urldate = {2026-04-03},
	journal = {Annales Scientifiques de l’École Normale Supérieure},
	author = {Perthame, B and Souganidis, P},
	month = jul,
	year = {1998},
	pages = {591--598},
}

@article{muller_product_1999,
	title = {Product rule for gauge invariant {Weyl} symbols and its application to the semiclassical description of guiding centre motion},
	volume = {32},
	issn = {0305-4470, 1361-6447},
	url = {https://iopscience.iop.org/article/10.1088/0305-4470/32/6/014},
	doi = {10.1088/0305-4470/32/6/014},
	number = {6},
	urldate = {2026-04-03},
	journal = {Journal of Physics A: Mathematical and General},
	author = {Müller, M},
	month = feb,
	year = {1999},
	pages = {1035--1052},
}

@article{serimaa_gauge-independent_1986,
	title = {Gauge-independent {Wigner} functions: {General} formulation},
	volume = {33},
	copyright = {http://link.aps.org/licenses/aps-default-license},
	issn = {0556-2791},
	shorttitle = {Gauge-independent {Wigner} functions},
	url = {https://link.aps.org/doi/10.1103/PhysRevA.33.2913},
	doi = {10.1103/PhysRevA.33.2913},
	language = {en},
	number = {5},
	urldate = {2026-04-03},
	journal = {Physical Review A},
	author = {Serimaa, O. T. and Javanainen, J. and Varró, S.},
	month = may,
	year = {1986},
	pages = {2913--2927},
}

@article{mantoiu_magnetic_2004,
	title = {The magnetic {Weyl} calculus},
	volume = {45},
	issn = {0022-2488, 1089-7658},
	url = {https://pubs.aip.org/jmp/article/45/4/1394/383168/The-magnetic-Weyl-calculus},
	doi = {10.1063/1.1668334},
	language = {en},
	number = {4},
	urldate = {2026-03-08},
	journal = {Journal of Mathematical Physics},
	author = {Măntoiu, Marius and Purice, Radu},
	month = apr,
	year = {2004},
	pages = {1394--1417},
}

@article{diperna_global_1989,
	title = {Global weak solutions of {Vlasov}-{Maxwell} systems},
	volume = {42},
	number = {6},
	journal = {Comm. Pure Appl. Math.},
	author = {DiPerna, Ronald J and Lions, Pierre-Louis},
	year = {1989},
	pages = {729--757},
}

@article{markowich_classical_1993,
	title = {The classical limit of a self-consistent quantum-{Vlasov} equation in {3D}},
	volume = {3},
	number = {01},
	journal = {Math. Mod. Meth. Appl. Sc.},
	author = {Markowich, Peter A and Mauser, Norbert J},
	year = {1993},
	pages = {109--124},
}

@article{lions_sur_1993,
	title = {Sur les mesures de {Wigner}},
	volume = {9},
	number = {3},
	journal = {Rev. Mat. Iberoamericana},
	author = {Lions, Pierre-Louis and Paul, Thierry},
	year = {1993},
	pages = {553--618},
}

\end{document}